\documentclass[11pt]{article}

\usepackage[margin=1in]{geometry}
\usepackage{amsmath,amssymb,amsthm,mathtools,bm,mathrsfs}
\usepackage{microtype}
\usepackage{graphicx}
\usepackage{subcaption}
\usepackage{xcolor}
\usepackage{hyperref}

\usepackage{subcaption}
\hypersetup{colorlinks=true,linkcolor=blue,citecolor=blue,urlcolor=blue}

\newtheorem{theorem}{Theorem}
\newtheorem{lemma}{Lemma}

\newtheorem{assumption}{Assumption}

\DeclareMathOperator{\tr}{tr}

\DeclareMathOperator{\supp}{spt}

\newcommand{\R}{\mathbb R}

\newcommand{\Sbb}{\mathbb S}

\begin{document}
	
	\title{Semidefinite relaxations for nonlinear elasticity  with energies convex in the Cauchy-Green strain tensor}
    
	\author{Didier Henrion$^{1,2}$, Milan Korda$^{1,2}$, Martin Kru\v z\'{\i}k$^{3,4}$, Karol\'{\i}na Sehnalov\'a$^1$}

\footnotetext[1]{Faculty of Electrical Engineering, Czech Technical University in Prague, Technick\'a 2, CZ-16626 Prague, Czechia.}
\footnotetext[2]{CNRS; LAAS; Universit\'e de Toulouse, 7 avenue du colonel Roche, F-31400 Toulouse, France. }
\footnotetext[3]{Institute of Information Sciences and Automation, Czech Academy of Sciences, Pod Vod\'arenskou v\v e\v z\'{\i} 4, CZ-18200 Prague, Czechia.}
\footnotetext[4]{Faculty of Civil Engineering, Czech Technical University in Prague, Th\'{a}kurova 7, CZ-16626 Prague, Czechia.}
	\date{\today}
	\maketitle
	
	\begin{abstract}
		In nonlinear elasticity, finding the deformation of a material which minimizes
		a given stored energy density is a challenging calculus of variations problem which may fail to have minimizers: the energy optimal material forms infinitely fine
		microstructures (wrinkles) rather than deforming smoothly.
		In the case where the energy function is non-convex but frame indifferent and
		convex with respect to the Cauchy-Green strain tensor, we use the standard Le Dret-Raoult semidefinite
		projection formula for the quasiconvex envelope of the energy function to prove
		that there is no relaxation gap between the original non-convex calculus of
		variations problem and its linear moment formulation based on occupation
		measures. This implies convergence of the Lasserre moment-sum-of-squares (SOS) hierarchy and provides a computationally efficient, mesh-free numerical method that, unlike the finite element method, avoids undesirable mesh-dependent artifacts. Under the additional condition that the boundary condition is linear and the function is SOS convex in the strain tensor, we show that the first
		relaxation of the Lasserre hierarchy is exact. In other words, computing the quasiconvex envelope at a point boils down to solving a
		small convex semidefinite optimization problem.
	\end{abstract}

	\section{Introduction}
	
	\subsection{Nonlinear elasticity and stored energy minimization}
	
\begin{figure}[ht]
	\centering
	\includegraphics[width=0.7\textwidth]{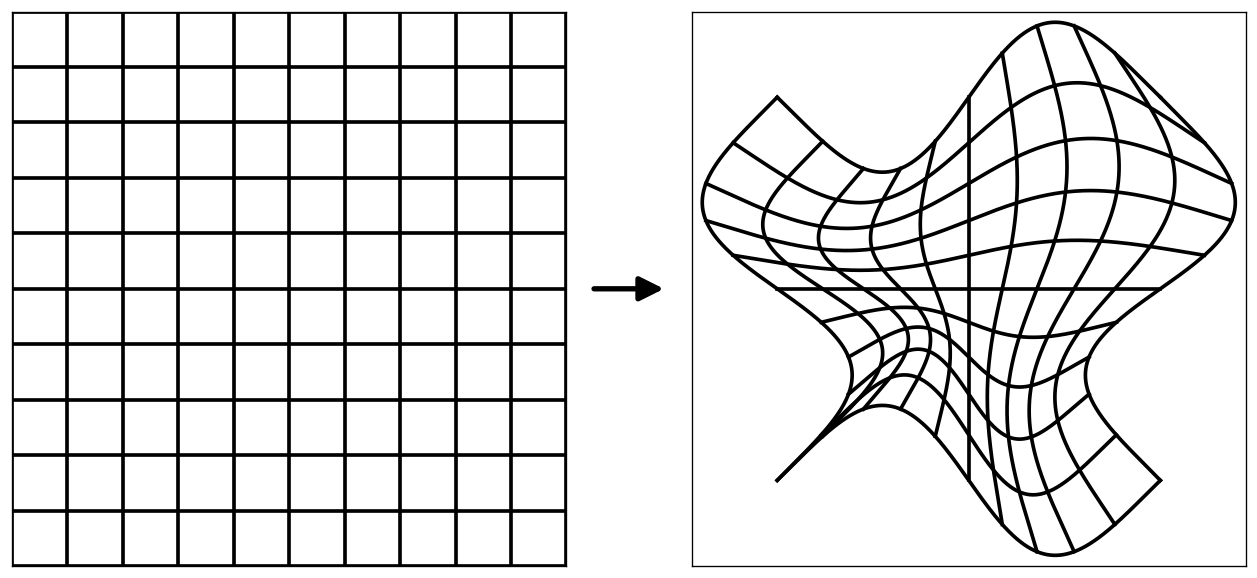}
	\caption{Left: reference configuration $\Omega$ with Cartesian grid.
		Right: deformed configuration $y(\Omega)$, image of the grid under
		the deformation $y$.}
	\label{fig:deformation}
\end{figure}
	
	Let $\Omega\subset\R^n$ be a bounded Lipschitz open domain, a reference configuration of an elastic body
	($n=2$ or $3$ in applications), and let
	\[
	y:\Omega\to\R^n
	\]
	denote a deformation mapping a material point $x\in\Omega$ in the reference
	configuration to its deformed position $y(x)$ in the current configuration.
	The  {deformation gradient} is the Jacobian matrix
	\[
	F(x):=\nabla y(x)\in\R^{n\times n}.
	\]
		Figure~\ref{fig:deformation} sketches the mapping $x\mapsto y(x)$ and the
	associated deformation of a Cartesian grid.

	In hyperelasticity, the elastic response is encoded in a stored energy density
	\[
	W:\R^{n\times n}\to[0,+\infty),
	\]
	and the total elastic energy associated with a deformation $y$ is
	\begin{equation}\label{eq:energy-functional}
		\int_\Omega W(\nabla y(x))\,dx.
	\end{equation}
    For a given $p>1$, let ${\mathscr W}^{1,p}_0(\Omega;\R^n)$ denote the Sobolev space of functions whose weak derivatives are in ${\mathscr L}^p(\Omega;\R^n)$ (the Lebesgue space of functions with integrable $p$-th powers) and which have zero trace on $\partial \Omega$. Let us prescribe Dirichlet conditions on the domain boundary $\partial \Omega$, and let $y_{\partial} \in {\mathscr W}^{1,\infty}(\Omega; \R^n)$  
    denote a given extension to $\Omega$. Accordingly, we denote the admissible function class by $y_{\partial}+{\mathscr W}^{1,p}_0(\Omega;\mathbb{R}^n)$.
    Therefore, we consider the
	calculus-of-variations problem
	\begin{equation}\label{eq:cov}
		\begin{aligned}
			J\ :=\ \inf_{y}\ &\int_\Omega W(\nabla y(x))\,dx\\
			\text{s.t. }&y\in y_{\partial}+{\mathscr W}^{1,p}_0(\Omega;\R^n).
		\end{aligned}
	\end{equation}	
    
	The functional \eqref{eq:cov} is typically non-convex in $\nabla y$,
	and minimizers may fail to exist even under smooth boundary data.
	A standard sufficient and  necessary  condition for the  existence of solutions under suitable growth conditions   is quasiconvexity of the
	integrand $W$ in the sense of Morrey \cite{Dacorogna2008}. Stronger conditions include
	polyconvexity and convexity, and a weaker condition is rank-one convexity, see e.g.
	\cite{Ciarlet1988,Dacorogna2008,KruzikRoubicek2019,Roubicek2020}.

	Given $F\in\R^{n\times n}$, the quasiconvex envelope of a continuous $W$  at $F$ is
	defined by
	\begin{equation}\label{eq:QW}
	     \begin{aligned}
			W_{\mathrm{quasi}}(F)\ :=\ \inf_y\ &\frac{1}{|\Omega|} \int_\Omega W(\nabla y(x))\,dx\\
			\text{s.t. }&y\in Fx+{\mathscr W}^{1,p}_0(\Omega;\R^n),
		\end{aligned}    
    \end{equation}
	where $|\Omega|$ is the Lebesgue volume of $\Omega$, see e.g. \cite{Dacorogna2008}. Up to normalization, it is the value of problem \eqref{eq:cov} for linear boundary conditions $y_{\partial}(x) = Fx$. It is the largest quasiconvex function below $W$. We say that a function is quasiconvex whenever it is everywhere equal to its quasiconvex envelope.
	
	Consider the relaxed
	calculus-of-variations problem
	\begin{equation}\label{eq:relaxed}
	\begin{aligned}
		J_{\mathrm{quasi}}\ :=\ \inf_y\ &\int_\Omega W_{\mathrm{quasi}}(\nabla y(x))\,dx\\
		\text{s.t. }&y\in y_{\partial}+{\mathscr W}^{1,p}_0(\Omega;\R^n)
	\end{aligned}
	\end{equation}
and the following standard growth conditions on the integrand.

\begin{assumption}[Two-sided growth]\label{ass:growth-two-sided}
	Let $p>1$ and let $W:\R^{n\times n}\to[0,+\infty)$ be continuous.
	We assume that there exist constants $c_1,c_2>0$ and $c_0\ge 0$ such that
	\begin{equation}\label{eq:two-sided-p-growth}
		c_1 |F|^p - c_0 \;\le\; W(F) \;\le\; c_2\bigl(1+|F|^p\bigr)
		\qquad\forall F\in\R^{n\times n}.
	\end{equation}
\end{assumption}

In the growth condition \eqref{eq:two-sided-p-growth},
$|F|$ denotes the Frobenius norm of $F$, i.e. the Euclidean norm of the vector of entries of matrix $F$.
The following result is standard in calculus of variations, see e.g. \cite[Thm. 9.1]{Dacorogna2008}.

\begin{theorem}\label{thm:M-equals-Mquasi}
Under Assumption~\ref{ass:growth-two-sided}, the original and relaxed problems have the same infimum
		\[
		J \;=\; J_{\mathrm{quasi}}
		\]
        and the infimum of the relaxed problem is attained.
		Moreover, for every $y\in y_\partial + \mathscr W^{1,p}_0(\Omega;\R^n)$ there exists
		a sequence $\{y_k\}\subset y_\partial + \mathscr W^{1,p}_0(\Omega;\R^n)$ such that
		\[
		y_k \to y \quad\text{in }{\mathscr L}^p(\Omega;\R^n),
		\qquad
		\int_\Omega W(\nabla y_k(x))\,dx
		\;\longrightarrow\;
		\int_\Omega W_{\mathrm{quasi}}(\nabla y(x))\,dx.
		\]
\end{theorem}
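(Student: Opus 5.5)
We plan to follow the classical relaxation route of Dacorogna \cite[Thm.~9.1]{Dacorogna2008}, in three steps: properties of $W_{\mathrm{quasi}}$, existence for the relaxed problem, and a recovery sequence.

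\textbf{Step 1: properties of $W_{\mathrm{quasi}}$.} First we record what we need about $W_{\mathrm{quasi}}$ defined in \eqref{eq:QW}. Taking $y(x)=Fx$ in \eqref{eq:QW} gives $W_{\mathrm{quasi}}\le W\le c_2(1+|F|^p)$. The lower bound $c_1|F|^p-c_0$ is itself convex, hence quasiconvex. Since $W_{\mathrm{quasi}}$ is the largest quasiconvex function below $W$, it satisfies the same two-sided bound \eqref{eq:two-sided-p-growth}. We then show that $W_{\mathrm{quasi}}$ is quasiconvex and continuous. Continuity follows because it is rank-one convex with $p$-growth, hence locally Lipschitz with $|W_{\mathrm{quasi}}(F)-W_{\mathrm{quasi}}(G)|\le C(1+|F|^{p-1}+|G|^{p-1})|F-G|$. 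The formula \eqref{eq:QW} is independent of $\Omega$ by a Vitali covering/rescaling argument.

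\textbf{Step 2: existence for the relaxed problem.} Acerbi--Fusco lower semicontinuity applies to continuous quasiconvex integrands with $0\le f\le c(1+|F|^p)$. It gives weak lower semicontinuity of $y\mapsto\int_\Omega W_{\mathrm{quasi}}(\nabla y)$ on $\mathscr W^{1,p}$. The lower bound in \eqref{eq:two-sided-p-growth}, together with Poincaré's inequality on $y-y_\partial\in\mathscr W^{1,p}_0$, gives coercivity. Since $p>1$ the space is reflexive, so the direct method yields a minimizer of \eqref{eq:relaxed}. Note also that $J_{\mathrm{quasi}}\le J$ trivially.

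\textbf{Step 3: recovery sequence (main obstacle).} This step proves the approximation statement, and it implies $J\le J_{\mathrm{quasi}}$. We first treat piecewise affine $y$, that is, $y_\partial$ plus a piecewise affine function, on a finite partition of $\Omega$ into simplices $\Omega_i$ with $\nabla y=F_i$. On each $\Omega_i$ we choose $\varphi_i\in\mathscr W^{1,\infty}_0(\Omega_i)$, admissible since $\mathscr W^{1,\infty}_0$ is dense in the admissible class and $W$ is continuous with growth. It should satisfy $\frac1{|\Omega_i|}\int_{\Omega_i}W(F_i+\nabla\varphi_i)\le W_{\mathrm{quasi}}(F_i)+1/k$. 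We then tile $\Omega_i$ by small rescaled copies via Vitali: $\varphi_{i,k}=\sum_j\varepsilon_j\varphi_i((x-a_j)/\varepsilon_j)$ with $\varepsilon_j\le 1/k$. This keeps the energy estimate, makes $\|\varphi_{i,k}\|_{\mathscr L^\infty}\le C/k$, and preserves zero boundary values, so gluing gives $y_k\to y$ in $\mathscr L^p$. For general $y$ we first need a more careful reduction. Since $y_\partial$ is only Lipschitz, we approximate $y$ strongly in $\mathscr W^{1,p}$ by functions in the admissible class that are piecewise affine up to the fixed $y_\partial$ part, or we work with $\nabla y$ piecewise constant off a small set. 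The continuity of $W_{\mathrm{quasi}}$ together with its $p$-growth gives convergence of $\int W_{\mathrm{quasi}}(\nabla\cdot)$ under strong $\mathscr W^{1,p}$ convergence by dominated convergence. Finally, a diagonal argument yields the sequence $y_k$ with $\int W(\nabla y_k)\to\int W_{\mathrm{quasi}}(\nabla y)$.

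The delicate points are twofold. The first is handling nonaffine $y_\partial$ while keeping exact boundary values. The second is the error on the region where $\nabla y$ is not piecewise constant, which is controlled by the upper growth bound $W\le c_2(1+|F|^p)$ and equi-integrability. Applying this to a minimizer from Step 2 gives $J\le J_{\mathrm{quasi}}$, hence $J=J_{\mathrm{quasi}}$.
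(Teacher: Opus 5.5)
Your route is the one the paper relies on: the paper gives no proof of its own and only cites Dacorogna's relaxation theorem, whose standard proof is your Steps 1--3. Your Steps 1 and 2 are fine.

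Step 3, however, leaves the essential reduction undone. The cell construction needs $\nabla y\equiv F_i$ on each cell, so that the definition of $W_{\mathrm{quasi}}(F_i)$ can be used there. The class you reduce to, \emph{$y_\partial$ plus a piecewise affine function}, has gradient $\nabla y_\partial+G_i$ on a cell, which is not constant unless $y_\partial$ is affine there. Besides, a general Lipschitz $\Omega$ admits no finite partition into simplices. So your first option does not work. Your second one (\emph{$\nabla y$ piecewise constant off a small set}) is exactly the object that must be built while keeping the trace $y_\partial$.

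A standard way to build it is as follows. Given $\varepsilon>0$, take $\psi\in C^\infty_c(\Omega;\R^n)$ with $\|y_\partial+\psi-y\|_{\mathscr W^{1,p}}\le\varepsilon$ and put $v_0:=y_\partial+\psi$. Choose an open $U$ with $\overline U\subset\Omega$ and $\int_{\Omega\setminus U}(1+|\nabla y|^p)\,dx\le\varepsilon$, and a cut-off $\eta\in C^\infty_c(\Omega)$ with $0\le\eta\le 1$ and $\eta=1$ on $U$. Only after $\eta$ is fixed, take a piecewise affine $w$ on a fine triangulation of a neighbourhood of $\supp\eta$ with $\|w-v_0\|_{\mathscr W^{1,p}}\le\varepsilon/(1+\|\nabla\eta\|_{\mathscr L^\infty})$ there, and set $v:=v_0+\eta(w-v_0)$. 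Then $v\in y_\partial+\mathscr W^{1,p}_0$, $v=w$ is affine on every mesh simplex contained in $U$, and $\|v-y\|_{\mathscr W^{1,p}}\le C\varepsilon$. Now:
\begin{itemize}
\item perturb only on those simplices, by $\mathscr W^{1,\infty}_0$ functions, so the trace is untouched;
\item bound the energy on the leftover set by $c_2\int(1+|\nabla v|^p)$; this set is $\Omega\setminus U$ plus a set of small measure once the mesh is fine;
\item pass from $\int_\Omega W_{\mathrm{quasi}}(\nabla v)$ to $\int_\Omega W_{\mathrm{quasi}}(\nabla y)$ using continuity and $p$-growth of $W_{\mathrm{quasi}}$;
\item diagonalize.
\end{itemize}

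Two smaller points. First, what you construct gives only $\limsup_k\int_\Omega W(\nabla y_k)\,dx\le\int_\Omega W_{\mathrm{quasi}}(\nabla y)\,dx$, which suffices for $J\le J_{\mathrm{quasi}}$. The stated convergence also needs the matching $\liminf$, and you should state how you get it. It follows from $W\ge W_{\mathrm{quasi}}$, the $\mathscr L^p$ bound on $\nabla y_k$ from the lower growth, $y_k\rightharpoonup y$ in $\mathscr W^{1,p}$, and the Acerbi--Fusco semicontinuity of Step 2. Second, $\varphi_i$ depends on $k$. So the bound $\|\varphi_{i,k}\|_{\mathscr L^\infty}\le C/k$ with $C$ independent of $k$ needs $\varepsilon_j\le 1/\bigl(k(1+\|\varphi_i\|_{\mathscr L^\infty})\bigr)$, not merely $\varepsilon_j\le 1/k$.
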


In practice, the main difficulty for using this theorem is that an explicit expression for the quasi-convex envelope \eqref{eq:QW} is rarely available. For convex envelopes, we have many tools (supporting hyperplanes, Legendre transforms, explicit formulas in low dimension), but for quasiconvex envelopes, there is no general representation theorem that turns the infinite-dimensional nonconvex variational problem \eqref{eq:QW} into a simple pointwise operation. Actually, solving problem \eqref{eq:QW}
is as difficult as solving the original problem \eqref{eq:cov}.

	\subsection{A model problem with microstructure}\label{sec:cauchygreen}
	
	For a simple example illustrating the difficulties faced in non-convex calculus of variations, consider the two-dimensional stored energy
	\begin{equation}\label{eq:cauchy-green-energy}
		W(F)=|F^T F-I_2|^2
	\end{equation}
	where  $I_2$ is the
	$2\times2$ identity matrix, and $|.|^2$ is the squared Frobenius norm, the sum of squares of the entries of a matrix. This is the squared distance, in the
	right Cauchy-Green strain tensor 
    \[C:=F^T F,\]
    from the natural
	state $I_2$. It is convex as a function of $C$ but not quasiconvex
	and even not rank-one convex as a function of $F$.
	
	Consider problem \eqref{eq:cov} with linear boundary data $y_\partial(x)=Ax$ with $A=\alpha I_2$ and $0<\alpha<1$. The energy of the homogeneous deformation $y(x)=Ax$ is strictly positive, since $W(A) = |(\alpha^2-1)I_2|^2
	= 2(\alpha^2-1)^2 > 0$ and
	$\int_\Omega W(A)\,dx = 2|\Omega|(\alpha^2-1)^2 > 0$.
	On the other hand, the wells of $W$ are precisely the orthogonal matrices $F \in O(2)$ at which $W(F)=0$. It can be proved that the quasiconvex envelope of $W$ is $W_{\mathrm{quasi}}(F) = 0$ whenever all singular values of $F$ are less than or equal to $1$.
	Since $A=\alpha I_2$ has singular values $\alpha\le 1$, we obtain $W_{\mathrm{quasi}}(A)=0$, and therefore, by the relaxation Theorem \ref{thm:M-equals-Mquasi}, $J=J_{\mathrm{quasi}}=
	\int_\Omega W_{\mathrm{quasi}}(A)\,dx
	\;=\; 0$.
	Hence
	\[
	J = \inf_{y} \int_\Omega W(\nabla y(x))\,dx \;=\; 0
	\;<\; \int_\Omega W(A)\,dx \;=\; 2|\Omega|(\alpha^2-1)^2,
	\]
	and the infimum is not attained in $y_\partial+\mathscr W^{1,p}_0(\Omega;\R^2)$. Minimizing sequences necessarily develop finer and finer oscillations in the interior of $\Omega$: the gradients $\nabla y_k$ take values close to the wells on most of the domain, so that $W(\nabla y_k)\approx 0$, while oscillating in such a way that their average gradient converges to $A$ and the boundary condition $y_k=Ax$ on $\partial\Omega$ is satisfied in the classical sense. This illustrates the formation of microstructures and the strict inequality between the non-relaxed energy and its quasiconvex relaxation.

	\subsection{Numerical difficulties and grid dependence}
	
	Finite element discretizations of non-quasiconvex problems such as the one of Section \ref{sec:cauchygreen} typically exhibit mesh-dependent patterns:
	the computed microstructure depends strongly on the mesh orientation
	and refinement; see, for example, the numerical experiments in \cite[Sec. 6.2, Fig. 3]{HorakKruzik2020}. Similar phenomena are
	widely reported in the numerical analysis literature on non-convex
	variational problems and phase transitions.
	
	Several numerical strategies attempt to deal with the lack of minimizers:
	\begin{itemize}
		\item introducing higher-order gradient regularization, e.g.\ via
		gradient polyconvex energies;
		\item direct numerical computation of quasiconvex envelopes using
		laminates or polyconvex/rank-one convex approximations;
		\item optimization directly over Young measures describing probability
		distributions of gradients.
	\end{itemize}
	Most approaches are mesh-based and require solving large nonlinear or
	nonsmooth problems on fine triangulations in order to resolve the
	microstructure.
	
\subsection{Convexity in the Cauchy-Green strain tensor}
	
	In this paper, our main assumptions on the stored energy $W$ are the following.

    \begin{assumption}[Frame indifference, or left invariance]\label{ass:frame}
		For all $F\in\R^{n\times n}$ and all $R\in O(n)$ (orthogonal matrices) we have
		\[
		W(RF)=W(F).
		\]
	\end{assumption}
	
	Assumption~\ref{ass:frame} is equivalent to the existence of a reduced energy function $\widetilde W:\Sbb^n_+\to[0,+\infty]$ such that
    \begin{equation}\label{eq:w_tilde}
        W(F)=\widetilde W(F^T F)=\widetilde W(C),
    \end{equation}
    see \cite{LeDretRaoult1995}.
	Physically, this means that the energy is invariant with respect to the (linear) change of coordinates. If the energy function is also right invariant, that is, $W(FR) = W(F)$ for $R\in O(n)$, then the energy function depends on $F$ only through the principal stretches (singular values) of the deformation gradient. While left invariance is a property of every energy function, right invariance is a property of isotropic elastic  materials. Accordingly,  right invariance is called isotropy.
	
	\begin{assumption}[Strain convexity]\label{ass:convex}
		The function $\widetilde W:\Sbb^n_+\to\R$ is convex.
	\end{assumption}

Under these regularity assumptions, Le Dret and Raoult \cite{LeDretRaoult1995} proved the following semidefinite projection
formula for the quasiconvex envelope. Let us define	\begin{equation}\label{eq:Wproj}
		W_{\mathrm{proj}}(F)
		:=\inf_{P\in\Sbb^n_+}\widetilde W(F^T F+P).
	\end{equation}

\begin{theorem}[Semidefinite projection formula]\label{thm:LDR}
If $W$ satisfies Assumptions~\ref{ass:frame}
	and~\ref{ass:convex} then
	\begin{equation}\label{eq:LDR}
		W_{\mathrm{quasi}}(F)\ =\ W_{\mathrm{proj}}(F)
	\end{equation}
		 for all $F\in\R^{n\times n}$.
	In particular, $W_{\mathrm{quasi}}$ coincides with the convex, polyconvex and rank-one
	convex envelopes of $W$.
\end{theorem}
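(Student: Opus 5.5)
The plan is to show that $W_{\mathrm{proj}}$ is sandwiched between the convex envelope $W_{\mathrm{conv}}$ and the quasiconvex envelope. We use the standard chain of envelopes
\[
W_{\mathrm{conv}}\le W_{\mathrm{poly}}\le W_{\mathrm{quasi}}\le W_{\mathrm{rc}}\le W,
\]
where each envelope is the largest function of its class below $W$. It then suffices to prove two things:
\[
W_{\mathrm{proj}}\le W_{\mathrm{conv}}
\qquad\text{and}\qquad
W_{\mathrm{rc}}\le W_{\mathrm{proj}}.
\]
Together these give equality throughout, which yields \eqref{eq:LDR} and the ``in particular'' statement at once.

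\emph{Lower bound.} We show that $W_{\mathrm{proj}}$ is convex in $F$ and satisfies $W_{\mathrm{proj}}\le W$. The inequality is immediate by taking $P=0$ in \eqref{eq:Wproj}. For convexity, the key observation is that $W_{\mathrm{proj}}(F)=\inf\{\widetilde W(C): C\succeq F^TF\}$. Take $F_1,F_2$, $\lambda\in[0,1]$, and $C_i\succeq F_i^TF_i$. The map $F\mapsto F^TF$ is convex with respect to the Loewner order, since
\[
\lambda F_1^TF_1+(1-\lambda)F_2^TF_2-F_\lambda^TF_\lambda=\lambda(1-\lambda)(F_1-F_2)^T(F_1-F_2)\succeq0,
\]
where $F_\lambda=\lambda F_1+(1-\lambda)F_2$. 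Hence $\lambda C_1+(1-\lambda)C_2\succeq F_\lambda^TF_\lambda$. By convexity of $\widetilde W$ (Assumption~\ref{ass:convex}), $W_{\mathrm{proj}}(F_\lambda)\le\lambda\widetilde W(C_1)+(1-\lambda)\widetilde W(C_2)$. Taking infima over $C_1,C_2$ gives convexity of $W_{\mathrm{proj}}$, and therefore $W_{\mathrm{proj}}\le W_{\mathrm{conv}}$.

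\emph{Upper bound.} This is the main step: for every $F$ and every $P\succeq0$ we must show $W_{\mathrm{rc}}(F)\le\widetilde W(F^TF+P)$. We first diagonalize $P=\sum_{i}\mu_i e_ie_i^T$ with $\mu_i\ge0$ and treat one rank-one increment $F^TF+\mu\,e e^T$ at a time, iterating at most $n$ times. It then suffices to prove the following claim:
\[
W_{\mathrm{rc}}(F)\le \sup\{W(G): G^TG=F^TF+\mu ee^T\}=\widetilde W(F^TF+\mu ee^T),
\]
where the last equality holds because $W$ depends only on $G^TG$ by Assumption~\ref{ass:frame}. Iterating is legitimate because rank-one convex envelopes can be nested: $W_{\mathrm{rc}}\le (\text{rank-one envelope of }W_{\mathrm{rc}})$, and the intermediate energies remain frame indifferent.

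To prove the claim, we look for a laminate. We seek vectors $a_\pm$ and $t\in(0,1)$ such that
\[
G_\pm=F+a_\pm\otimes e,\qquad tG_++(1-t)G_-=F,
\qquad G_\pm^TG_\pm=F^TF+\mu ee^T.
\]
The first two conditions amount to $ta_++(1-t)a_-=0$, and the rank-one difference $G_+-G_-=(a_+-a_-)\otimes e$ then makes the splitting admissible for rank-one convexity. Expanding the third condition gives
\[
e\,(F^Ta_\pm)^T+(F^Ta_\pm)\,e^T+|a_\pm|^2ee^T=\mu ee^T.
\]
We therefore need $F^Ta_\pm=0$ and $|a_\pm|^2=\mu$. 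If $F$ is singular, we pick $a_\pm=\pm\sqrt\mu\,v$ with $v\in\ker F^T$ a unit vector and $t=1/2$.

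If $F$ is invertible we cannot choose $F^Ta=0$, and this is the expected obstacle. We resolve it by an additional rotation, using frame indifference. We take $G_\pm=R_\pm(F+b_\pm\otimes e)$ with suitable $R_\pm\in O(n)$, or equivalently we require only that $G_\pm^TG_\pm$ have the right value while $G_+-G_-$ is rank one with mean $F$. Concretely, writing $F e = f$, we look for $G_\pm = F + c_\pm\otimes e$ with $|f+c_\pm|^2=|f|^2+\mu$ and $F^Tc_\pm$ parallel to $e$. This reduces to choosing $c_\pm$ along the two intersection points of the line $\{f+s\,w\}$ with the sphere of radius $\sqrt{|f|^2+\mu}$, where $w\perp F e_j$ for $j\ne$ the $e$-direction (working in the eigenbasis). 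That line crosses the sphere at two points on opposite sides of $f$, which provides $t\in(0,1)$.

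With the laminate in hand, rank-one convexity gives $W_{\mathrm{rc}}(F)\le tW(G_+)+(1-t)W(G_-)=\widetilde W(F^TF+\mu ee^T)$. Taking the infimum over $P$ then yields $W_{\mathrm{rc}}\le W_{\mathrm{proj}}$, which closes the chain.
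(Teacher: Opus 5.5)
The paper gives no proof of Theorem~\ref{thm:LDR}. It quotes the formula from Le Dret and Raoult and only uses its consequences: the representation \eqref{eq:conv-envelope-repr} in the proof of Theorem~\ref{thm:M-equals-Mlin}, and the value $\inf_{Q\succeq0}\widetilde W(A^TA+Q)$ in Theorem~\ref{thm:first-relax-exact}. Your proposal is a correct, self-contained proof via the sandwich $W_{\mathrm{proj}}\le W_{\mathrm{conv}}\le W_{\mathrm{poly}}\le W_{\mathrm{quasi}}\le W_{\mathrm{rc}}\le W_{\mathrm{proj}}$.

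For the lower half, writing $W_{\mathrm{proj}}(F)=\inf\{\widetilde W(C):C\succeq F^TF\}$ builds Loewner monotonicity into the infimum. Loewner convexity of $F\mapsto F^TF$ together with convexity of $\widetilde W$ then suffices.

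The upper half is also right. Take $G_\pm=F+s_\pm w\otimes e$ with $w\perp Fe_j$ for all basis vectors $e_j\ne e$. Then $F^Tw\parallel e$, so $G_\pm^TG_\pm=F^TF+\mu ee^T$ holds exactly when $|f+s_\pm w|^2=|f|^2+\mu$. The two roots of this quadratic in $s$ have product $-\mu/|w|^2\le0$, which gives $t\in(0,1)$ for $\mu>0$.

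Compared with the citation, your route buys transparency. Convexity of $\widetilde W$ enters only the lower bound and frame indifference only the upper bound. The identification with $W_{\mathrm{conv}}$ and $W_{\mathrm{rc}}$ is elementary. The only outside input is the standard fact that \eqref{eq:QW} lies between these two envelopes: Jensen's inequality for one side, and Dacorogna's representation theorem (which uses continuity of $W$) for the other. The citation buys brevity.

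Two places should be tightened, though neither is a gap.

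First, the expansion of $G_\pm^TG_\pm$ does not force $F^Ta_\pm=0$. The exact condition is $F^Ta_\pm=\beta_\pm e$ with $|a_\pm|^2+2\beta_\pm=\mu$. This is precisely what your line--sphere construction solves, since $|f+c|^2-|f|^2=2\,e\cdot F^Tc+|c|^2$. That construction works for every $F$, so drop the singular/invertible case split and the remark about extra rotations $R_\pm$. Rotating $G_+$ and $G_-$ separately would in general destroy both the mean and the rank-one connection.

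Second, the phrase ``$W_{\mathrm{rc}}\le$ the rank-one envelope of $W_{\mathrm{rc}}$'' is a tautology and does not justify the iteration. State it as an induction on the number of rank-one pieces of $P$:
\begin{itemize}
\item Rank-one convexity of $W_{\mathrm{rc}}$ gives $W_{\mathrm{rc}}(F)\le tW_{\mathrm{rc}}(G_+)+(1-t)W_{\mathrm{rc}}(G_-)$ with $G_\pm^TG_\pm=F^TF+\mu_1e_1e_1^T$.
\item The induction hypothesis, applied at $G_\pm$, bounds each $W_{\mathrm{rc}}(G_\pm)$ by $\widetilde W(F^TF+P)$.
\end{itemize}
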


Thus the problem of computing the quasiconvex envelope of $W$ at a given matrix $F$ reduces to
a convex optimization problem in the strain variable $C=F^T F$ and a
positive semidefinite slack variable $P$.  The stored energy densities that are convex in $F^\top F$  have interesting properties that relate the solutions of the equilibrium equations in elasticity and minimizers  of $J$, see \cite{Gao}.

	\subsection{Contribution}
	
This paper follows a recent line of research \cite{HenrionKordaKruzikRios2024,FantuzziTobasco2022,FantuzziFuentes2024} which promotes a different numerical method for non-convex calculus of variations, based on semidefinite optimization and the moment-SOS hierarchy a.k.a. Lasserre hierarchy \cite{HenrionKordaLasserre2020}, as a follow-up of previous research endeavors on optimal control of ordinary differential equations, see e.g. \cite{Claeys2017, Augier2024} and references therein. The key ideas are as follows.
\begin{enumerate}
	\item Instead of minimizing \eqref{eq:cov} over deformations $y$, we
	relax the problem to a linear program (LP) over measures. We show that there is no relaxation gap, i.e. the LP on measures has the same infimum as \eqref{eq:cov} under the convexity Assumptions \ref{ass:frame} and \ref{ass:convex}. This implies convergence of the Lasserre hierarchy for solving \eqref{eq:cov} with polynomial data.
	
	\item In the case of linear boundary condition (equivalent to computing the quasiconvex envelope at a given point),  and if the energy function is SOS convex in the strain tensor  (i.e. its Hessian is a polynomial matrix SOS, which implies convexity), we show that the first relaxation in the Lasserre hierarchy is exact: solving \eqref{eq:cov} amounts to solving a small-dimensional semidefinite problem (SDP) in the strain tensor.  
\end{enumerate}
From the computational viewpoint, this means that, for a broad class of
nonlinear elastic energies, solving globally a non-convex 
calculus of variations problem reduces to solving a family of mesh-free finite-dimensional SDPs, for which efficient solvers are available \cite{BenTalNemirovski2001}.
Moreover, computing the quasiconvex envelope pointwise amounts to solving a single low-dimensional SDP.
 
	\subsection{Outline}
	
	The rest of the paper is organized as follows. In
	Section~\ref{sec:occupation} we describe the linear formulation of the calculus of variations problem with occupation
	measures and prove the absence of relaxation gap. Section~\ref{sec:momsos}
	introduces the Lasserre hierarchy of semidefinite relaxations and proves its convergence. Section~\ref{sec:exact} shows the exactness of the
	first relaxation for computing the quasiconvex envelope, under the condition that the strain tensor is SOS convex. Section~\ref{sec:extraction} briefly discusses minimizer extraction and Section~\ref{sec:examples}
	illustrates numerically the approach on the isotropic Saint Venant-Kirchhoff energy density and an anisotropic variation.  We conclude in Section~\ref{sec:conclusion} with
	remarks and open questions.

\section{Occupation measures}\label{sec:occupation}

First, we rephrase the calculus of variations problem in terms of {occupation measures}, following the program outlined in \cite{KordaHenrionLasserre2022}.
Given a deformation $y\in y_\partial + \mathscr W^{1,p}_0(\Omega;\R^n)$, we introduce the
occupation measure $\mu$ on
$\Omega\times\R^n\times\R^{n\times n}$ by
\begin{equation}\label{eq:occupation}
\int_{\Omega\times\R^n\times\R^{n\times n}}\!\!\!
\phi(x,y,Z)\,d\mu(x,y,Z)
:=\int_\Omega \phi\bigl(x,y(x),\nabla y(x)\bigr)\,dx
\end{equation} 
for all bounded Borel test functions
$\phi:\Omega\times\R^n\times\R^{n\times n}\to\R$.  In other words, it is the push-forward or image measure of the uniform or Lebesgue measure on $\Omega$ through the map $x \mapsto (x,y(x),\nabla y(x))$.

We also introduce the boundary occupation measure $\mu_\partial$ on
$\partial\Omega\times\R^n$ by
\begin{equation}\label{eq:boundary}
\int_{\partial\Omega\times\R^n}\phi(x,y)\,d\mu_\partial(x,y)
:=\int_{\partial\Omega}\phi\bigl(x,y_\partial(x)\bigr)\,d\sigma(x)
\end{equation}
for all bounded Borel test functions $\phi : \partial \Omega \times \R^n \to \R$, where $d\sigma$ denotes the surface or Hausdorff
measure on $\partial\Omega$. It is the push-forward of the Hausdorff measure through the map $x \mapsto (x,y_{\partial}(x))$.

Let $\phi:\overline\Omega\times\R^n\to\R^n$ be a smooth vector field
with compact support in $\overline\Omega\times\R^n$. For a smooth
deformation $y$ the chain rule gives
\[
\partial_{x_i}(\phi_i(x,y(x)))
=
\partial_{x_i}\phi_i(x,y(x))
+\sum_{j=1}^n
\partial_{y_j}\phi_i(x,y(x))\,\partial_{x_i}y_j(x),
\]
and the divergence theorem yields
\[
\int_\Omega
\sum_{i=1}^n \left(\partial_{x_i}\phi_i(x,y(x))
+\sum_{j=1}^n
\partial_{y_j}\phi_i(x,y(x))\,\partial_{x_i}y_j(x)\right)dx
=
\int_{\partial\Omega}
\phi(x,y(x))\cdot n(x)\,d\sigma(x),
\]
where $n(x)$ is the outward unit normal on $\partial\Omega$.
In terms of the occupation measures $(\mu,\mu_{\partial})$ this identity can
be written as the Stokes constraints
\begin{equation}\label{eq:stokes-measure}	\int_{\Omega\times\R^n\times\R^{n\times n}}
\mathcal D\phi(x,y,Z)\,d\mu(x,y,Z)
	=
	\int_{\partial\Omega\times\R^n}
	\phi(x,y)\cdot n(x)\,d\mu_\partial(x,y)
\end{equation}
upon defining as in \cite{FantuzziTobasco2022} the total divergence operator
\begin{equation}\label{eq:div}
\mathcal D \phi(x,y,Z) := \sum_{i=1}^n\left(\partial_{x_i}\phi_i(x,y)
+\sum_{j=1}^n\partial_{y_j}\phi_i(x,y)\,Z_{j,i}\right).
\end{equation}
The infinite-dimensional linear programming (LP) relaxation of the calculus of variations problem \eqref{eq:cov} then reads 
\begin{equation}\label{eq:LP-occupation}
	\begin{aligned}
		J_{\mathrm{lin}}:= \inf_{\mu}\quad
		&\int_{\Omega\times\R^n\times\R^{n\times n}} W(Z)\,d\mu(x,y,Z) \\
		\text{s.t.}\quad
		& 	\int_{\Omega\times\R^n\times\R^{n\times n}} 
		\mathcal D\phi(x,y,Z) \, d\mu(x,y,Z) 
		=
\int_{\partial\Omega\times\R^n}\phi(x,y)\cdot n(x)\,d\mu_\partial(x,y), \quad \forall \phi \in \Phi
	\end{aligned}
\end{equation}
where $\Phi$ denotes the class of smooth test fields
$\varphi:\overline\Omega\times\mathbb R^n\to\mathbb R^n$
satisfying
\[
|\partial_x \phi(x,y)| \le C\,(1+|y|^p),
\qquad
|\partial_y \phi(x,y)| \le C\,(1+|y|^{p-1})
\]
for some constant $C$ and all $(x,y)\in\overline\Omega\times\mathbb R^n$.
This class contains in particular test fields that are constant or affine in $y$,
which will be used below.

In LP \eqref{eq:LP-occupation} the unknown is the measure $\mu$. The boundary measure $\mu_\partial$ is entirely specified by \eqref{eq:boundary} from the boundary condition.
  
\begin{theorem}[No relaxation gap]\label{thm:M-equals-Mlin}
Under Assumptions~\ref{ass:growth-two-sided}, \ref{ass:frame} and \ref{ass:convex}, the optimal value  
	of the occupation measure LP  \eqref{eq:LP-occupation} coincides
	with the value  of the original calculus-of-variations problem
	\eqref{eq:cov}, i.e.
	\[
	J_{\mathrm{lin}} \;=\; J.
	\]
\end{theorem}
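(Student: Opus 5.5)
The inequality $J_{\mathrm{lin}}\le J$ is immediate. For any admissible $y\in y_\partial+\mathscr W^{1,p}_0(\Omega;\R^n)$ the occupation measure $\mu$ defined by \eqref{eq:occupation} satisfies the Stokes constraints \eqref{eq:stokes-measure} for every $\phi\in\Phi$. To see this, approximate $y$ by smooth maps with the same trace; the growth bounds defining $\Phi$ together with $y\in\mathscr L^{p}$, $\nabla y\in\mathscr L^p$ and Hölder's inequality let us pass to the limit in the chain-rule identity. The cost of $\mu$ is $\int_\Omega W(\nabla y)\,dx$. Taking the infimum over $y$ gives $J_{\mathrm{lin}}\le J$.

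The substance lies in the reverse inequality $J_{\mathrm{lin}}\ge J$. The plan is to show that every feasible $\mu$ with finite cost has cost at least $J_{\mathrm{quasi}}$, and then use $J_{\mathrm{quasi}}=J$ from Theorem~\ref{thm:M-equals-Mquasi}. Fix a feasible $\mu$ with $\int W\,d\mu<\infty$.

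\emph{Step 1: disintegration and the barycentric deformation.} Test \eqref{eq:stokes-measure} with $\phi(x,y)=\psi(x)e_i$ for $\psi$ compactly supported in $\Omega$. This shows that the $x$-marginal of $\mu$ is Lebesgue measure on $\Omega$, so we can disintegrate $d\mu=dx\otimes\nu_x(dy,dZ)$ with $\nu_x$ probability measures. By the lower growth bound in Assumption~\ref{ass:growth-two-sided}, $\int|Z|^p\,d\mu<\infty$. Some control of $\int|y|^p\,d\mu$ is also needed; I would obtain it from test fields like $\phi=\psi(x)\,y$ via a Poincaré-type argument, or else restrict $\Phi$ accordingly, since affine-in-$y$ fields are admissible.

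Define $\bar y(x):=\int y\,d\nu_x$ and $\bar Z(x):=\int Z\,d\nu_x$. Testing with fields affine in $y$, namely $\phi(x,y)=\psi(x)\,y_j e_i$ with $\psi$ smooth on $\overline\Omega$, gives
\[
\int_\Omega\bigl(\partial_i\psi\,\bar y_j+\psi\,\bar Z_{j,i}\bigr)\,dx=\int_{\partial\Omega}\psi\,y_{\partial,j}\,n_i\,d\sigma .
\]
Hence $\bar y\in\mathscr W^{1,p}$ with $\nabla\bar y=\bar Z$ and trace $y_\partial$, i.e.\ $\bar y\in y_\partial+\mathscr W^{1,p}_0$. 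Here Jensen's inequality gives $\bar Z\in\mathscr L^p$.

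\emph{Step 2: Jensen with the convex envelope.} By Theorem~\ref{thm:LDR}, $W_{\mathrm{quasi}}=W_{\mathrm{proj}}$ equals the convex envelope of $W$. In particular it is convex and satisfies $W\ge W_{\mathrm{quasi}}$. Therefore
\[
\int W\,d\mu\ \ge\ \int_\Omega\!\int W_{\mathrm{quasi}}(Z)\,d\nu_x\,dx\ \ge\ \int_\Omega W_{\mathrm{quasi}}(\bar Z(x))\,dx=\int_\Omega W_{\mathrm{quasi}}(\nabla\bar y)\,dx\ \ge\ J_{\mathrm{quasi}}=J .
\]
This is exactly where Assumptions~\ref{ass:frame} and \ref{ass:convex} enter. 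For a general $W$ only quasiconvexity would be available, and Jensen's inequality would fail for non-gradient Young measures. The semidefinite projection formula upgrades quasiconvexity to genuine convexity, so the plain pointwise Jensen inequality suffices and we never need to show that the $\nu_x$ are gradient Young measures.

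\emph{Main obstacle.} I expect the delicate part to be Step 1: rigorously justifying that the Stokes constraints with affine test fields identify $\bar Z$ as the weak gradient of $\bar y$ with the correct boundary trace. This requires integrability of $y$ under $\mu$, which is not obviously implied by feasibility alone. My first attempt would use the vector fields $\phi=\psi(x)\,y_j e_i$, together with truncations $\phi=\psi(x)\,T_R(y_j)e_i$ with $T_R$ a bounded smooth cutoff, to first get $\int|y|\,d\mu<\infty$. I would derive this from a Poincaré-type estimate involving $\int|Z|^p\,d\mu$ and the boundary data, and then let $R\to\infty$ by dominated convergence.
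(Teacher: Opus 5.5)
Your proposal is correct and follows essentially the same route as the paper: you disintegrate $\mu$ over Lebesgue measure, identify $\nabla\bar y=\bar Z$ with trace $y_\partial$ via the affine-in-$y$ test fields $\psi(x)y_je_i$, apply pointwise Jensen to the convex envelope $W_{\mathrm{quasi}}\le W$ supplied by Theorem~\ref{thm:LDR}, and conclude with $J_{\mathrm{quasi}}=J$. The integrability of $y$ under $\mu$ that you flag is only asserted in the paper, and your truncation argument closes it provided you truncate a nonnegative coercive function such as $\sqrt{1+|y|^2}$ rather than the signed $y_j$ (otherwise cancellations are not controlled) and pass to the limit by monotone convergence; likewise, compactly supported $\psi$ only show the $x$-marginal is a constant multiple of Lebesgue measure, and fixing the constant needs a field such as $\phi=x_ie_i$.
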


\begin{proof}
Let us first prove that $J_{\mathrm{lin}}\le J$.
	Let $y\in y_\partial+\mathscr W^{1,p}_0(\Omega;\R^n)$ be admissible for
	\eqref{eq:cov}. Its associated occupation measure $\mu$ is defined as in \eqref{eq:occupation} and its associated boundary measure is defined as in \eqref{eq:boundary}.
	By construction, the measure $\mu$ satisfies the Stokes
	constraints \eqref{eq:stokes-measure}. Moreover,
	\[
	\int_{\Omega\times\R^n\times\R^{n\times n}} W(Z)\,d\mu(x,y,Z)
	= \int_\Omega W(\nabla y(x))\,dx.
	\]
	Thus every admissible deformation $y$ for \eqref{eq:cov} produces a
	feasible measure $\mu$ for \eqref{eq:LP-occupation} with the same
	objective value, and therefore
	\[
	J_{\mathrm{lin}} \;\le\; J.
	\]

Now let us prove the converse inequality $J_{\mathrm{lin}}\ge J$. Since the quasi-convex envelope $W_\mathrm{quasi}$ is convex by Theorem~\ref{thm:LDR} and the constraints are linear, it follows from~\cite[Theorem 3.1]{HenrionKordaKruzikRios2024} that the infimum in~\eqref{eq:LP-occupation} is equal to $J_\mathrm{quasi}$ when $W$ is replaced by $W_\mathrm{quasi}$. Since the optimal value of \eqref{eq:LP-occupation} is monotone with respect to the minimized potential, the conclusion that $J_\mathrm{lin} \ge J$  follows from the facts that $W \ge W_{\mathrm{quasi}}$ and $J = J_{\mathrm{quasi}}$ by Theorem~\ref{thm:M-equals-Mquasi}.

For the reader's convenience, and to keep this paper self-contained, we provide below a full proof of the converse inequality, by extracting the relevant technical devices from  the proof of ~\cite[Theorem 3.1]{HenrionKordaKruzikRios2024}. Let $\mu$ be any feasible measure for
\eqref{eq:LP-occupation} with finite energy
$\int W\,d\mu<\infty$.
We first disintegrate $\mu$ with respect to
its $x$-marginal and identify the corresponding barycentric
fields, following the arguments of \cite{HenrionKordaKruzikRios2024}.

By testing \eqref{eq:stokes-measure} with functions of the form
$\phi(x,y)=\psi(x)e_i$ for 
$\psi$ smooth and
summing over $i$, one shows that the $x$-marginal of $\mu$ is Lebesgue
measure on $\Omega$.
Hence there exists a family of probability measures
$\{\nu_x\}_{x\in\Omega}$ on $\R^n\times\R^{n\times n}$ such that
\[
d\mu(x,y,Z) = d\nu_x(y,Z)\,dx
\]
from the disintegration theorem. Define the barycentric fields
\begin{align} \label{eq:y_bar}
\bar y(x) := \int_{\R^n\times\R^{n\times n}} y\,d\nu_x(y,Z),\qquad
\bar Z(x) := \int_{\R^n\times\R^{n\times n}} Z\,d\nu_x(y,Z).    
\end{align}

Using the $p$-growth of $W$ and Assumption~\ref{ass:growth-two-sided},
$\bar y$ and $\bar Z$ belong to ${\mathscr L}^p(\Omega)$ by Jensen's
inequality and the integrability of $y$ and $Z$ with respect to $\mu$.

Next, we test \eqref{eq:stokes-measure} with
$\phi(x,y)=\psi(x)e_i$ 
for $\psi$ smooth.
A standard integration by parts argument, together with the definition
of $\mu_\partial$ as the pushforward of $\sigma$ by
$x\mapsto(x,y_\partial(x))$, shows that
\[
\bar y\in y_\partial+\mathscr W^{1,p}_0(\Omega;\R^n).
\]
Testing \eqref{eq:stokes-measure} with
$\phi(x,y)=\psi(x)y_j e_i$ and using the disintegration of $\mu$, one
obtains 
\[
\int_\Omega \partial_{x_i}\psi(x)\,\bar y_j(x)\,dx
+ \int_\Omega \psi(x)\,\bar Z_{j,i}(x)\,dx
=
\int_{\partial\Omega} \psi(x)\,y_{\partial,j}(x)\,n_i(x)\,d\sigma(x).
\]
Comparing this with the weak formulation of the identity
$\partial_{x_i}\bar y_j = \bar Z_{j,i}$ and recalling that the trace of
$\bar y$ on $\partial\Omega$ is $y_\partial$, we conclude that
$\bar y\in\mathscr W^{1,p}(\Omega;\R^n)$ with
\[
\nabla\bar y(x) = \bar Z(x)
\quad\text{for a.e. }x\in\Omega.
\]

We now compare the energy of $\mu$ with the quasiconvex envelope of $W$
evaluated at $\nabla\bar y$. Let $\zeta_x$ be the marginal of $\nu_x$
on $\R^{n\times n}$, so that
\[
\bar Z(x) = \int_{\R^{n\times n}} Z\,d\zeta_x(Z)
\quad\text{for a.e. }x\in\Omega.
\]

By Theorem~\ref{thm:LDR}, under Assumptions~\ref{ass:frame}
and~\ref{ass:convex} the quasiconvex envelope $W_{\mathrm{quasi}}$
coincides with the convex envelope of $W$. In particular, we have the
representation
\begin{equation}\label{eq:conv-envelope-repr}
	W_{\mathrm{quasi}}(F)
	=
	\sup\{V(F): V \text{ affine on }\R^{n\times n},\ V\leq W\}.
\end{equation}
Fix $x$ and apply this to $F=\bar Z(x)$. Using that $\bar Z(x)$ is the
barycenter of $\zeta_x$ and the linearity of affine maps, we obtain
\begin{align*}
	W_{\mathrm{quasi}}(\bar Z(x))
	&=
	\sup_{V\le W} V(\bar Z(x))
	=
	\sup_{V\le W}
	\int_{\R^{n\times n}} V(Z)\,d\zeta_x(Z)\\
	&\le
	\sup_{V\le W}
	\int_{\R^{n\times n}} W(Z)\,d\zeta_x(Z)
	=
	\int_{\R^{n\times n}} W(Z)\,d\zeta_x(Z),
\end{align*}
since  $V(Z)\le W(Z)$ for all $Z$.

Integrating this inequality over $\Omega$ and using the disintegration
of $\mu$ gives
\[
\int_\Omega W_{\mathrm{quasi}}(\nabla\bar y(x))\,dx
=
\int_\Omega W_{\mathrm{quasi}}(\bar Z(x))\,dx
\le
\int_\Omega\int_{\R^{n\times n}} W(Z)\,d\zeta_x(Z)\,dx
=
\int W\,d\mu.
\]
Since $\mu$ was an arbitrary feasible measure for
\eqref{eq:LP-occupation}, we obtain
\[
\inf_{\mu\ \text{feasible}}
\int_{\Omega \times \R^n \times \R^{n\times n}} W\,d\mu(x,y,Z)
\;\ge\;
\inf_{y\in y_\partial+\mathscr W^{1,p}_0(\Omega;\R^n)}
\int_\Omega W_{\mathrm{quasi}}(\nabla y(x))\,dx,
\]
that is,
\[
J_{\mathrm{lin}}
\;\ge\;
J_{\mathrm{quasi}}.
\]

Finally, by Theorem~\ref{thm:M-equals-Mquasi} we have $J=J_{\mathrm{quasi}}$, hence
\[
J_{\mathrm{lin}}
\;\ge\;
J.
\]
\end{proof}

\section{Lasserre hierarchy}\label{sec:momsos}

We now assume that the stored energy density $W$ is a polynomial of degree at most $2d$ and
that the space of admissible triples $(x,y,Z)$ is a basic semialgebraic
set. More precisely, we assume that there exist polynomials
\[
g_1,\dots,g_m\in\R[x,y,Z]
\]
such that the support of the occupation measure $\mu$ is contained in a semialgebraic set
\[
K
:=
\Bigl\{(x,y,Z)\in\R^{n}\times\R^n\times\R^{n\times n}:
g_j(x,y,Z)\ge 0,\ j=1,\dots,m\Bigr\}.
\]
Under these assumptions the linear measure problem
\eqref{eq:LP-occupation} is a generalized moment problem, and it can be approximated by a hierarchy of semidefinite
relaxations, the moment-SOS a.k.a. Lasserre hierarchy \cite{HenrionKordaLasserre2020}.

\subsubsection*{Truncated moment sequences and moment matrices}

Given $r \geq d$, let $\mathcal B_r=(b_\alpha(x,y,Z))_{|\alpha|\le 2r}$ denote a basis spanning the vector space of polynomials
\[
p(x,y,Z) = \sum_{|\alpha|\leq 2r} p_\alpha b_\alpha(x,y,Z)
\]
of total degree at most $2r$, and let $z=\{z_\alpha\}_{|\alpha|\le 2r}$
be a truncated moment sequence indexed by multi-indices
$\alpha\in\mathbb N^{n+n+n^2}$, where
\[
z_\alpha
=
\int_{K}
b_{\alpha}(x,y,Z) \,d\mu(x,y,Z),
\]
and $\alpha = (\alpha_x,\alpha_y,\alpha_Z)$ is split according to the
variables. Given a vector $z$, build the linear functional
\[
\ell_z\left(p(x,y,Z)\right) = \ell_z\left(\sum_\alpha p_\alpha b_\alpha(x,y,Z)\right)
:=\sum_\alpha p_\alpha z_\alpha.
\]
The {moment matrix} $M_r(z)$ is the symmetric matrix associated to the quadratic form $p  \mapsto \ell_z(p^2)$, where $p$ is a polynomial of degree at most $r$.
Positivity of the underlying measure $\mu$ implies
$M_r(z)\succeq 0$.

For each constraint polynomial $g_j$ we also introduce the associated
localizing matrix associated to the quadratic form $p  \mapsto \ell_z(g_j p^2 )$ where $p$ is a polynomial of degree at most $r-r_j$ with 
$r_j:=\left\lceil \frac{\deg g_j}{2}\right\rceil$.
If $\mu$ is supported on $K$ then $g_j\ge 0$ on $\supp\mu$,
and this implies $M_{r-r_j}(g_j z)\succeq 0$.
With the notation $g_0 \equiv 1$, $r_0:=0$,
the localizing matrix $M_{r-r_0}(g_0 y)$ is equal to the moment matrix $M_r(y)$.

\subsubsection*{Polynomial approximation of the Stokes constraints}

The Stokes constraints \eqref{eq:stokes-measure} read
\[
\int \mathcal D\phi\,d\mu
=
\int \phi\cdot n\,d\mu_\partial
\qquad\forall\phi\in\Phi,
\]
with $\mathcal D$ defined in \eqref{eq:div}.
To obtain a finite-dimensional relaxation we use finite set $\mathcal B_r$ of test functions spanning the vector space of polynomials of degree at most $2r$. Later on, in order to prove convergence of the relaxations when the degree $r$ goes to infinity, we will restrict the support of the polynomials to a compact set. 

The total divergence $\mathcal D\phi(x,y,Z)$ is a polynomial of degree up to $2r$, and we can write
\[
\int_{K} \mathcal D\phi(x,y,Z)\,d\mu(x,y,Z)
=
\ell_z\bigl(\mathcal D\phi\bigr),
\]
a linear form in the truncated moment sequence $z$.

On the right-hand side, the boundary measure $\mu_\partial$ is known
explicitly from the boundary data via \eqref{eq:boundary}. For any
polynomial vector field $\phi$ we can therefore precompute (or
symbolically represent) the boundary functional
\[
b(\phi)
:=
\int_{\partial\Omega\times\R^n} \phi(x,y)\cdot n(x)\,d\mu_\partial(x,y)
=
\int_{\partial\Omega} \phi\bigl(x,y_\partial(x)\bigr)\cdot n(x)\,d\sigma(x),
\]
which is a scalar depending linearly on $\phi$.

We thus
obtain a finite family of linear constraints on $z$:
\begin{equation}\label{eq:stokes-moment}
	\ell_z\bigl(\mathcal D \phi\bigr)
	=
	b(\phi),
	\qquad \phi \in \mathcal B_r.
\end{equation}

\subsubsection*{Moment relaxation}

The $r$-th order moment relaxation of the measure LP
\eqref{eq:LP-occupation} is the semidefinite program
\begin{equation}\label{eq:SDP-occupation}
	\begin{aligned}
		J_{\mathrm{mom}}^r
		:=\inf_{z}\quad & \ell_z(W)\\[0.3em]
		\text{s.t.}\quad
		& M_{r-r_j}(g_j z)\succeq 0,\qquad j=0,1,\dots,m\\
		& 	\ell_z\bigl(\mathcal D\phi\bigr)
		=
		b(\phi),
		\qquad \phi \in \mathcal B_r.
	\end{aligned}
\end{equation}

In our setting, the natural support of the occupation measure lies in
$\Omega\times\R^n\times\R^{n\times n}$, which is noncompact in the
$(y,Z)$ variables. To apply the moment-SOS machinery we therefore
introduce a truncation parameter $R>0$ and work on the compact basic
semialgebraic sets
\[
K_R
:=
K \cap
\bigl\{(x,y,Z)\in\overline\Omega\times\R^n\times\R^{n\times n}:
|y|^2+|Z|^2 \le R^2\bigr\}.
\]

By construction, the minimal relaxation order is
\[
r_{\min}:=\max\{\lceil\frac{\text{deg}\,W}{2}\rceil,\max_{j=0,1,\ldots,m}\lceil\frac{\text{deg}\,g_j}{2}\rceil\}.
\]
Let $J_{\mathrm{lin},R}$ denote the optimal value of the
occupation-measure LP \eqref{eq:LP-occupation} with the additional
support constraint $\supp\mu\subset K_R$, and let
$J^r_{\mathrm{mom},R}$ be the optimal value of the $r$-th order semidefinite
relaxation built on $K_R$. For each fixed $R$, the set $K_R$ is compact
and Archimedean by construction, and Putinar's Positivstellensatz
implies
\[
J^r_{\mathrm{mom},R}\ \uparrow\ J_{\mathrm{lin},R}
\qquad\text{as }r\to\infty
\]
as in the original Lasserre hierarchy for polynomial optimization \cite{Lasserre2001}.
By construction
$K_R\subset K_{R'}$ for $R\le R'$  
so the feasible sets are nested and
\[
J_{\mathrm{lin},R_1}\ \ge\ J_{\mathrm{lin},R_2}\ \ge\ J_{\mathrm{lin}}
\qquad\text{whenever }R_1\le R_2,
\]
i.e.\ $\{J_{\mathrm{lin},R}\}_R$ is a monotone decreasing sequence
bounded below by $J_{\mathrm{lin}}$. 
Let us now prove that it converges.

\begin{lemma}
It holds $\lim_{R\to\infty} J_{\mathrm{lin},R}=J_{\mathrm{lin}}.$
\end{lemma}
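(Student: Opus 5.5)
Since $\{J_{\mathrm{lin},R}\}_R$ is monotone decreasing and bounded below by $J_{\mathrm{lin}}$, the limit $L:=\lim_{R\to\infty}J_{\mathrm{lin},R}$ exists and satisfies $L\ge J_{\mathrm{lin}}$. It therefore suffices to prove $L\le J_{\mathrm{lin}}$. By Theorem~\ref{thm:M-equals-Mlin} we have $J_{\mathrm{lin}}=J$, so it is enough to show $L\le J$. I will work with deformations rather than with arbitrary feasible measures, which avoids any truncation argument on measures.

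Fix $\varepsilon>0$ and choose $y\in y_\partial+\mathscr W^{1,p}_0(\Omega;\R^n)$ with $\int_\Omega W(\nabla y)\,dx\le J+\varepsilon$. The aim is to replace $y$ by a deformation $\tilde y$ in the same admissible class that is Lipschitz and satisfies $\int_\Omega W(\nabla\tilde y)\,dx\le J+2\varepsilon$. Its occupation measure is then supported in $K_R$ for every $R\ge \|\tilde y\|_\infty+\|\nabla\tilde y\|_\infty$, provided the constraints $g_j$ defining $K$ are satisfied by the graph. I take this as implicit in the modelling assumption that $K$ contains the supports of occupation measures of admissible deformations. That occupation measure is feasible for the $R$-truncated LP by the first part of the proof of Theorem~\ref{thm:M-equals-Mlin}, so $J_{\mathrm{lin},R}\le J+2\varepsilon$ for all large $R$. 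Hence $L\le J+2\varepsilon$, and letting $\varepsilon\to0$ gives the claim.

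The approximation step is the main obstacle. Since $y_\partial\in\mathscr W^{1,\infty}$, I write $y=y_\partial+u$ with $u\in\mathscr W^{1,p}_0$. By density of $C_c^\infty(\Omega;\R^n)$ in $\mathscr W^{1,p}_0$, I pick $u_k\in C_c^\infty$ with $u_k\to u$ in $\mathscr W^{1,p}$. Then $\nabla(y_\partial+u_k)\to\nabla y$ in $\mathscr L^p$, and each $y_\partial+u_k$ is Lipschitz with the correct trace. Continuity of $\int_\Omega W(\nabla\cdot)\,dx$ along this sequence follows from the upper growth bound in Assumption~\ref{ass:growth-two-sided}, $W(F)\le c_2(1+|F|^p)$, together with continuity of $W$. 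Concretely, passing to a subsequence converging a.e. with an $\mathscr L^p$-dominating function, the generalized dominated convergence theorem (Vitali) gives $\int_\Omega W(\nabla y_k)\,dx\to\int_\Omega W(\nabla y)\,dx$. Choosing $k$ large yields $\tilde y$.

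A second point to check is that the truncated LP constrains the support of $\mu$ in the full space $\overline\Omega\times\R^n\times\R^{n\times n}$, so no tail mass needs to be handled once $\tilde y$ is bounded with bounded gradient. If one prefers to argue purely at the measure level, the alternative would be to restrict a near-optimal $\mu$ to $\{|y|^2+|Z|^2\le R^2\}$. However, that breaks the Stokes constraints, which is why the route through Lipschitz deformations and Theorem~\ref{thm:M-equals-Mlin} is the one I would follow.
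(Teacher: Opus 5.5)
Your proof is correct and follows the paper's argument: you reduce to deformations via Theorem~\ref{thm:M-equals-Mlin}, approximate a near-optimal $y$ by a Lipschitz deformation with the same trace (density in $\mathscr W^{1,p}_0$ plus Vitali/dominated convergence under the upper $p$-growth bound), and use its occupation measure as a feasible point of the truncated LP. The one difference is in your favour: you use only the sandwich $J_{\mathrm{lin}}\le J_{\mathrm{lin},R}\le\int_\Omega W(\nabla\tilde y)\,dx$ for large $R$, whereas the paper goes through $J_R$ and asserts $J_R=J_{\mathrm{lin},R}$ by ``repeating the proof'' of Theorem~\ref{thm:M-equals-Mlin}; the direction $J_R\le J_{\mathrm{lin},R}$ of that equality would need a relaxation result under pointwise bounds on $(y,\nabla y)$, which Theorem~\ref{thm:M-equals-Mquasi} does not supply and which your argument shows is unnecessary.
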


\begin{proof}
By Theorem~\ref{thm:M-equals-Mlin} we have $J_{\mathrm{lin}} = J$, the
	value of the original calculus-of-variations problem \eqref{eq:cov}.
	It therefore suffices to work at the level of deformations and show
	that imposing a uniform bound on $(y,\nabla y)$ does not change the
	infimum.
	
	For $R>0$ let
	\[
	\mathcal Y_R
	:=
	\Bigl\{y\in y_\partial+{\mathscr W}^{1,p}_0(\Omega;\R^n)\,:\,
	|y(x)|^2+|\nabla y(x)|^2\le R^2\ \text{for a.e. }x\in\Omega\Bigr\} \subset {\mathscr W}^{1,\infty}(\Omega;\R^n)
	\] 
	and define
	\[
	J_R
	:=
	\inf_{y\in\mathcal Y_R} \int_\Omega W(\nabla y(x))\,dx.
	\]
	By construction $\mathcal Y_R\subset\mathcal Y_{R'}$ for $R\le R'$ and
	$\bigcup_{R>0}\mathcal Y_R = (y_\partial+{\mathscr W}^{1,p}_0) \cap \mathscr W^{1,\infty}$. Hence
	$J_R\ge J$ for all $R$ and $J_R\downarrow\widetilde J\ge J$ as
	$R\to\infty$. Moreover, repeating the proof of
	Theorem~\ref{thm:M-equals-Mlin} with the additional pointwise bound
	$|y|^2+|\nabla y|^2\le R^2$, one sees that $J_R$ coincides with
	$J_{\mathrm{lin},R}$ (there is no relaxation gap for the truncated
	problem either). Thus it is enough to prove that $J_R\to J$.

Fix $\varepsilon>0$. By definition of $J$, there exists
$y\in y_{\partial}+W^{1,p}_0(\Omega;\mathbb{R}^n)$
such that
\[
\int_{\Omega} W(\nabla y(x))\,dx \le J+\frac{\varepsilon}{2}.
\]
Define $u:=y-y_{\partial}\in W^{1,p}_0(\Omega;\mathbb{R}^n).$
Since $W^{1,\infty}_0(\Omega;\mathbb{R}^n)$ is dense in
$W^{1,p}_0(\Omega;\mathbb{R}^n)$, there exists a sequence $u_k\in W^{1,\infty}_0(\Omega;\mathbb{R}^n)$
such that $u_k\to u$ in $W^{1,p}(\Omega;\mathbb{R}^n).$
Define $y_k:=y_{\partial}+u_k$.
Then $y_k\in y_{\partial}+W^{1,\infty}_0(\Omega;\mathbb{R}^n)
\subset y_{\partial}+W^{1,p}_0(\Omega;\mathbb{R}^n),$
so each $y_k$ has the same trace on $\partial\Omega$ as $y_{\partial}$, and $y_k\to y$ in $W^{1,p}(\Omega;\mathbb{R}^n)$.

Passing to a subsequence if necessary, we may assume that $\nabla y_k(x)\to \nabla y(x)$ for a.e. $x\in\Omega.$ Since $W$ is continuous, it follows that
$W(\nabla y_k(x))\to W(\nabla y(x))$
for a.e. $x\in\Omega$. Moreover, since $\nabla y_k\to \nabla y$ in $L^p(\Omega;\mathbb R^{n\times n})$, the sequence $\{|\nabla y_k|\}_k$ converges to $|\nabla y|$ almost everywhere and in
$L^p(\Omega)$. Hence, by Vitali's Theorem
\cite[Thm.~B.2.4]{KruzikRoubicek2019}, the family
$\{|\nabla y_k|^p\}_k$ is uniformly integrable.
Using the two-sided $p$-growth assumption on $W$, we have $|W(\nabla y_k)|\le C\bigl(1+|\nabla y_k|^p\bigr)$
for some constant $C>0$, and therefore the family
$\{|W(\nabla y_k)|\}_k$ is uniformly integrable as well.
Applying again Vitali's theorem, now with
$p=1$ to the sequence $W(\nabla y_k)$, we obtain
\[
W(\nabla y_k)\to W(\nabla y)\qquad\text{in }L^1(\Omega),
\]
and thus
\[
\int_\Omega W(\nabla y_k(x))\,dx \to \int_\Omega W(\nabla y(x))\,dx.
\]
Choose $k$ large enough so that
\[
\left|\int_{\Omega} W(\nabla y_k(x))\,dx
-
\int_{\Omega} W(\nabla y(x))\,dx\right|
\le \frac{\varepsilon}{2}.
\]
Then
\[
\int_{\Omega} W(\nabla y_k(x))\,dx \le J+\varepsilon.
\]

Since $y_k\in W^{1,\infty}(\Omega;\mathbb{R}^n)$, there exists $R_k>0$ such that
\[
|y_k(x)|^2+|\nabla y_k(x)|^2\le R_k^2
\qquad\text{for a.e. }x\in\Omega,
\]
that is, $y_k\in {\mathcal Y}_{R_k}$. Hence for every $R\ge R_k$,
\[
J_R\le \int_{\Omega} W(\nabla y_k(x))\,dx \le J+\varepsilon.
\]
Since also $J_R\ge J$ for all $R$, we conclude that $J_R\downarrow J$ as $R\to\infty$.

\end{proof}

The dual of \eqref{eq:SDP-occupation} is an SOS problem that can be
naturally interpreted as a polynomial approximation of the dual
occupation-measure LP. We do not describe it here. The moment-SOS hierarchy  
provides a sequence of finite-dimensional SDPs whose optimal values
converge to the optimal value of the linear occupation-measure problem
\eqref{eq:LP-occupation}, and whose dual solutions yield polynomial
subsolutions of the Stokes inequality.

\section{Exactness of the first Lasserre relaxation}
\label{sec:exact}

\begin{assumption}[SOS convexity]\label{ass:lfsos}
	The function $C\mapsto \tilde{W}(C)$ is SOS convex, i.e. its Hessian matrix is SOS. 
\end{assumption}

SOS convexity is a notion introduced originally in \cite{helton-nie10} to study semidefinite representations of convex sets.
SOS convexity is stronger than convexity, even though it is challenging to find polynomials that are convex but not SOS convex, see \cite{AhmadiParrilo2012} for an explicit example of degree 8 in 3 variables, and \cite[Sec. 7.1.1]{nie23} for a recent survey.

Let us prove that, under Assumption \ref{ass:lfsos},  the first relaxation of the occupation-measure moment-SOS hierarchy \eqref{eq:SDP-occupation} is exact
for the COV problem \eqref{eq:cov} when the Dirichlet boundary condition is linear.
In that case, the variational problem is a computation of the quasiconvex envelope $W_{\mathrm{quasi}}(F)$.

\begin{theorem}[Exactness of the first relaxation for linear boundary data]\label{thm:first-relax-exact}
Under Assumptions \ref{ass:growth-two-sided}, \ref{ass:frame}, \ref{ass:lfsos} and for a given matrix $A\in\mathbb R^{n\times n}$, consider the COV problem
	\eqref{eq:cov} with Dirichlet condition $y_\partial(x)=Ax$ on $\partial\Omega$.
	Then the first (order $r=r_{\min}$) relaxation of the occupation-measure SDP \eqref{eq:SDP-occupation} is exact: its optimal value equals
	the true value of \eqref{eq:cov} and is given by the semidefinite projection formula
	\begin{equation}\label{eq:first-relax-value}
		J_{\mathrm{mom}}^{r_{\min}}
		\;=\;
		|\Omega|\,W_{\mathrm{quasi}}(A).
	\end{equation}
\end{theorem}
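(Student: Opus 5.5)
The plan is to prove two inequalities. The easy one, $J_{\mathrm{mom}}^{r_{\min}}\le |\Omega| W_{\mathrm{quasi}}(A)$, follows by monotonicity. Every relaxation is a relaxation of the measure LP \eqref{eq:LP-occupation}, so $J^{r}_{\mathrm{mom}}\le J_{\mathrm{lin}}$. By Theorem~\ref{thm:M-equals-Mlin}, $J_{\mathrm{lin}}=J$, and for $y_\partial(x)=Ax$ definition \eqref{eq:QW} gives $J=|\Omega|W_{\mathrm{quasi}}(A)$. Here SOS convexity of $\widetilde W$ implies convexity, so Assumption~\ref{ass:convex} holds and these results apply.

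The substantive step is the lower bound $J_{\mathrm{mom}}^{r_{\min}}\ge |\Omega|W_{\mathrm{proj}}(A)$, which by Theorem~\ref{thm:LDR} equals $|\Omega|W_{\mathrm{quasi}}(A)$. I would proceed entirely at the level of a feasible pseudo-moment vector $z$. First I would apply the Stokes constraints \eqref{eq:stokes-moment} with low-degree test fields. The choice $\phi=\psi(x)e_i$ with $\psi\equiv 1$ gives $\ell_z(1)=|\Omega|$. The choice $\phi=y_je_i$ gives $\ell_z(Z_{j,i})=\int_{\partial\Omega}A_{j\cdot}x\, n_i\,d\sigma=|\Omega|A_{j,i}$, by the divergence theorem. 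So the normalized functional $L:=\ell_z/|\Omega|$ satisfies $L(1)=1$ and $L(Z)=A$.

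Next I would introduce the pseudo-expected strain $\bar C:=L(Z^TZ)$, whose entries are degree-2 moments. Positivity of $M_1(z)$ gives the Schur-complement bound $L(Z^TZ)\succeq L(Z)^TL(Z)$, that is, $P:=\bar C-A^TA\succeq 0$. This is the pseudo-moment analogue of Jensen's inequality for the matrix-valued quadratic map $Z\mapsto Z^TZ$. To see it, apply the moment matrix to polynomials $v^T Z^T w$-type combinations, namely $\sum_k (Zv)_k$ paired against constants, which yields $v^T(L(Z^TZ)-L(Z)^TL(Z))v\ge 0$.

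It then remains to show $L(W)=L(\widetilde W(Z^TZ))\ge \widetilde W(L(Z^TZ))=\widetilde W(\bar C)$. Given this, $L(W)\ge \widetilde W(A^TA+P)\ge W_{\mathrm{proj}}(A)$, and multiplying by $|\Omega|$ gives the claim. The inequality is a Jensen inequality for SOS-convex polynomials under a pseudo-expectation, which is the key place Assumption~\ref{ass:lfsos} enters. I would use the standard fact, following Helton--Nie and Lasserre, that for SOS-convex $f$ and any point $u$, the polynomial $f(C)-f(u)-\nabla f(u)\cdot(C-u)$ is SOS in $C$ (integral form of the Taylor remainder with an SOS Hessian). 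Setting $u=\bar C$ and composing with $C=Z^TZ$ gives a polynomial in $Z$ that is a sum of squares. Applying $L$ then yields $L(\widetilde W(Z^TZ))-\widetilde W(\bar C)-\nabla\widetilde W(\bar C)\cdot(L(Z^TZ)-\bar C)\ge 0$, and the linear term vanishes.

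The main obstacle is degree bookkeeping. The composed squares have degree up to $\deg W$ in $Z$, and $L$ is only guaranteed nonnegative on squares of degree $\le 2r_{\min}$. Since $r_{\min}\ge\lceil \deg W/2\rceil$ and each square $q(Z^TZ)^2$ arising from the SOS decomposition has degree at most $\deg W$, this fits inside $M_{r_{\min}}(z)\succeq 0$. I expect the careful verification that the SOS certificate's components have the right degree, together with the extension of \eqref{eq:stokes-moment} to the specific test fields used above (which belong to $\mathcal B_{r_{\min}}$ once $r_{\min}\ge1$), to be the technical core of the proof.
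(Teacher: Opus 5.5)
Your proposal is correct and is essentially the paper's proof. As in the paper, the Stokes constraints fix $\ell_z(1)=|\Omega|$ and $\ell_z(Z)=|\Omega|A$, the Schur complement from $M_1(z)\succeq 0$ gives $L(Z^\top Z)\succeq A^\top A$, the SOS-convex Jensen inequality for $\widetilde W$ under a pseudo-expectation gives the lower bound, and the relaxation chain gives the upper bound. The only difference is that you prove the Jensen step via the Helton--Nie SOS certificate for the Taylor remainder and make explicit the degree check $\deg q_k(Z^\top Z)\le \deg\widetilde W\le r_{\min}$, which the paper leaves implicit when citing Lasserre.

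Two slips to fix. The choice $\phi=e_i$ (i.e.\ $\psi\equiv 1$) only yields the trivial identity $0=0$; use $\phi=x_ie_i$ instead, for which $\mathcal D\phi=1$ and $\int_{\partial\Omega}x_in_i\,d\sigma=|\Omega|$. For the Schur step, apply $M_1(z)\succeq 0$ to the degree-one polynomials $(Zv)_k-(Av)_k$ and sum over $k$.
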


\begin{proof}
Let $y$ be an admissible pseudo-moment vector in \eqref{eq:SDP-occupation}, and let
	\[
	Y_1 := \ell_y(Z) \in \R^{n\times n},
	\qquad
	Y_2 := \ell_y(Z^\top Z) \in \mathbb S_+^n.
	\]
	In the linear Dirichlet case $y_\partial(x)=Ax$, the (first degree) Stokes constraints appearing in \eqref{eq:SDP-occupation}
	imply
	\begin{equation}\label{eq:mass-mean-grad}
		y_0 = |\Omega|,
		\qquad
		Y_1= |\Omega|\,A.
	\end{equation}
	Indeed, the constant and affine (in $y$)   test fields in the Stokes constraints fix the mass and the mean gradient.
	The moment matrix constraint in \eqref{eq:SDP-occupation} implies that the matrix
	\[
		\begin{pmatrix}
			y_0 & Y_1\\
			Y^\top_1 & Y_2
		\end{pmatrix}
		\succeq 0
	\]
	is positive semidefinite. Since $y_0>0$, taking the Schur complement yields
	\[
		Y_2 \;\succeq\; \frac{1}{y_0}\,Y^\top_1 Y_1.
	\]
	Using \eqref{eq:mass-mean-grad} we obtain
	\begin{equation}\label{eq:C-dom}
		\frac{1}{|\Omega|}\,Y_2 \;\succeq\; A^\top A.
	\end{equation}
	Hence there exists $P\succeq 0$ such that
	\begin{equation}\label{eq:C-FP}
		\frac{1}{|\Omega|}\,Y_2 = A^\top A + P.
	\end{equation}

Recall that $W(Z)=\widetilde W(Z^\top Z)$ with $\widetilde W$  SOS convex by Assumption \ref{ass:lfsos}. Then we can use the extension of Jensen's inequality proposed in \cite[Thm. 2.6]{lasserre09}, see also \cite[Sec. 7.1.2]{nie23}, to establish that 
\[
\frac{1}{|\Omega|}\,\ell_z\bigl(W\bigr) = 
\frac{1}{|\Omega|}\,\ell_z\bigl(\widetilde W(Z^\top Z)\bigr)
\ \ge\
\widetilde W \left(\frac{1}{|\Omega|}\,\ell_z(Z^\top Z)\right)
=
\widetilde W(\frac{1}{|\Omega|}\,Y_2)
=
\widetilde W(A^\top A + P).
\]
Therefore,
\begin{equation}\label{eq:lower-bound-pseudo}
\ell_z(W) \ge\ 
|\Omega|\,\widetilde W(A^\top A+P).
\end{equation}
Taking the infimum over all feasible pseudo-moment sequences $z$ gives
\begin{equation}\label{eq:first-relax-lb-pseudo}
J_{\mathrm{mom}}^{r_{\min}}
\ \ge\
|\Omega|\inf_{Q\succeq 0}\widetilde W(A^\top A+Q).
\end{equation}
The reverse inequality follows immediately since, by construction, the first relaxation \eqref{eq:SDP-occupation} is a relaxation of the linear problem \eqref{eq:LP-occupation} that is itself a relaxation of the original problem.
\end{proof}

Theorem~\ref{thm:first-relax-exact} is a linear boundary data result.
For general (nonlinear) Dirichlet data $y_\partial$, the first order relaxation typically cannot encode sufficient compatibility
	information beyond the mass and mean-gradient constraints, so exactness at the first level should not be expected without
	additional structure.

\section{Minimizer extraction}\label{sec:extraction}

The goal of this section is to describe how one can obtain an approximate solution to the relaxed variational problem~(\ref{eq:relaxed}) from the solution to the SDP relaxation of order $r$. First, one has to observe that the solution to the infinite dimensional LP is a priori not guaranteed to be supported on a single minimizer (or a superposition thereof) due to the vectorial nature of the problem (see \cite[Section 3]{krzgap}). Fortunately, the convexity of $W_{\text{quasi}}$ allows us to extract one minimizer of (\ref{eq:relaxed}) as the conditional barycenter $\bar y(x)$ of the measure $\mu(x,y,Z)$ defined in~\eqref{eq:y_bar}. Numerically, we approximate this conditional barycenter using the $L_2$ definition of conditional expectation with a polynomial ansatz for $\bar  y$. Concretely, we solve
\[
\arg\min_{p\in\mathbb{R}[x]_r} \int \|y-p(x)\|^2\,d\mu(x,y,Z),
\]
which is equivalent to solving the normal equations
\[
\left(\int \phi(x)\phi(x)^\top\,d\mu(x,y,Z)\right)\,c
=
\int \phi(x)\,y\,d\mu(x,y,Z),
\]
where $\phi(x) \in \mathcal B_r$ is a polynomial basis vector of degree $r$ and $c \in \R^N$ is the unknown coefficient vector, where $N = \binom{n+r}{r}$. The barycenter $\bar y(x)$ can be approximated by $p(x)= c^T \phi(x)$, where $c$ is an optimal solution to the normal equations. A similar procedure, using $L_2$ regression, was used for (unconditional) density approximation in~\cite{Henrion2014density}.

\section{Examples}\label{sec:examples}
In this section, we numerically solve the occupation measure relaxation \eqref{eq:LP-occupation} on the isotropic Saint Venant-Kirchoff (SVK) energy function with zero Poisson ratio, and an anisotropic variation, which are all convex in the Cauchy-Green strain.  We use GloptiPoly \cite{henrion_gloptipoly_2009} to model the optimization problem and MOSEK \cite{mosek} to solve the SDP problems arising from the relaxations of the infinite-dimensional LP, which were introduced in section \ref{sec:momsos}. We use the minimizer extraction procedure described in Section~\ref{sec:extraction}. The apriori bound $R$ on the $(y,Z)$ variables, as introduced in section \ref{sec:momsos} was selected such that increasing the bound further did not change the optimum. In the case of a linear boundary condition, we know that the choice of $R$ is sufficient by obtaining the analytical value.

Consider the SVK energy function
\begin{equation}\label{eq:svk}
W(F) = \frac{\lambda}{2} (\tr E)^2 + \mu \tr(E^2)
\end{equation}
where $E = \frac{1}{2}(F^T F- I_2)$ is the Green-Lagrange strain tensor and $\lambda, \mu \geq 0$ are the Lam\'e parameters. The value $\nu = \frac{\lambda}{2(\lambda+\mu)}$ is the Poisson ratio, which measures how much thinner the material gets relative to how much longer it gets.

\subsection{SVK with zero Poisson ratio}

\begin{figure}[ht]
	\centering
	\includegraphics[width=0.7\textwidth]{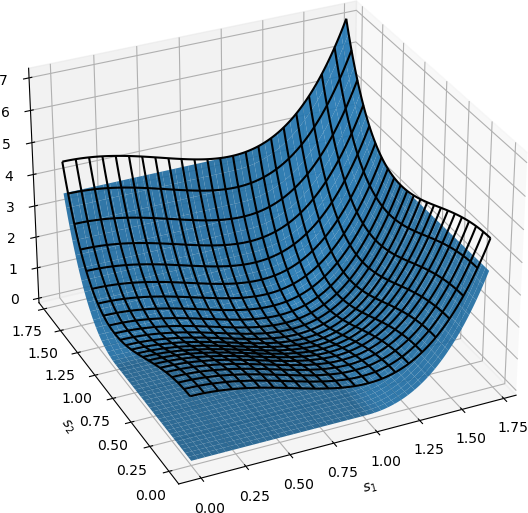}
	\caption{The black wired frame is the nonconvex energy
		$W(F)=|F^T F-I_2|^2$ in the plane of singular values
		$(s_1,s_2)$ of $F$. The blue surface below is the quasiconvex envelope $W_{\mathrm{quasi}}$, which is equal to $W$ when $\min(s_1,s_2) \geq 1$.
	\label{fig:cauchy-green-surface}}
\end{figure}
	
When $\nu = \lambda = 0$ and $\mu=4$, the energy function simplifies to
	\[
	W(F)=|F^T F-I_2|^2,
	\qquad F\in\R^{2\times2}
	\]
already mentioned in Section \ref{sec:cauchygreen}. It satisfies the growth Assumption \ref{ass:growth-two-sided} and the frame indifference Assumption \ref{ass:frame}.	
	Here
	\[
	\widetilde W(C)=|C-I_2|^2
	\]
	is convex and quadratic in
	$C\in\Sbb^2_+$, satisfying the convexity Assumption \ref{ass:convex}. Denoting the singular values of $F$ by $s_1\geq s_2\geq 0$, it holds
	\[
	W(F) = (s^2_1-1)^2+(s^2_2-1)^2.
	\]
	The semidefinite projection formula \eqref{eq:LDR} of Theorem \ref{thm:LDR} yields
	\[
	W_{\mathrm{quasi}}(F) = \inf_{S \in \Sbb^2_+} |S- (I_2 -F^T F)|^2
	\]
	which is the squared Frobenius distance from $I_2 -F^T F$ to the semidefinite cone $\Sbb^2_+$, see Figure \ref{fig:cauchy-green-surface}. The orthogonal projection onto 
	$\Sbb^2_+$ is given by spectral truncation:
	\begin{equation}\label{spectral}
	W_{\mathrm{quasi}}(F)  = \max(s^2_1-1,0)^2 + \max(s^2_2-1,0)^2
	\end{equation}
	so that $W(F)=W_{\mathrm{quasi}}(F)$ when $\min(s_1,s_2)\geq 1$, see Figure \ref{fig:cauchy-green-surface}.

\subsubsection{Linear boundary conditions}

Consider the following calculus of variations problem
\begin{align}
\label{ex_simple_original}
    J = &\inf_{y \in W^{1, 4}(\Omega)} \int_{\Omega}|F^T(x)F(x) - I|^2dx \\
    \text{s.t.: }&\left.y\right|_{\partial \Omega} = Ax,\notag\\
    & y(x) \in \R^2, \quad F(x) = \nabla y(x) \in \R^{2\times 2} \notag,
\end{align}
where $A$ is a given full rank matrix and $\Omega = [0,1] \times [0,1]$. We choose $y \in W^{1, 4}(\Omega)$, because the degree of the polynomial integrand is 4. The occupation measure relaxation of problem \eqref{ex_simple_original}, which corresponds to problem \eqref{eq:LP-occupation}, is
\begin{align}
\label{ex_simple_measure}
        J_{\text{lin}} := &\inf_{\mu \in\mathcal{M}_+(\Omega \times \R^2 \times \R^{2\times 2})}\int |Z^TZ - I|^2\,d\mu(x,y,Z) \\
    \text{s.t.: } & \int 
		\mathcal D\phi \, \mu 
		=
		\int 
		\phi \cdot n \, \mu_\partial, \quad \forall \phi \in \Phi. \notag
\end{align}
Problem \eqref{ex_simple_measure} can be directly solved with the moment-SOS hierarchy.

Problem \eqref{ex_simple_original} has an analytical solution, as described in the previous section. We can compare the sequence of optima of the SDP problems corresponding to solving \eqref{ex_simple_measure} with increasing relaxation order (or equivalently, increasing degree of polynomial test functions in the constraint $\mathcal D\phi \, \mu =\int \phi \cdot n \, \mu_\partial$) with this analytical optimum. However, since function $\tilde{W}(C)$ is convex quadratic, its Hessian is positive semidefinite, therefore Assumption~\ref{ass:lfsos} is satisfied, and from Theorem \ref{thm:first-relax-exact}, we know that the first relaxation is already exact in the case of linear boundary condition.

Numerically, for a matrix 
\[
A = \begin{pmatrix}
    1.15& 0.65\\ 0.65& 1.15
\end{pmatrix},
\]
the numerical value obtained at the first relaxation matches the analytical optimum $5.017560$ at 7 significant digits. The barycenter deformation $\bar y(x)$ extracted on a wireframe grid ${x_i}$ (with 7 horizontal and vertical lines, each consisting of 80 points) on $\Omega$, is shown in Figure \ref{fig:simple}. The black dots are the samples of the wireframe in $\Omega$ and the red dots represent the extracted $\bar y(x)$ of the black grid. The black filled line is the original boundary of $\Omega$ and the red filled line is the boundary condition $y(x) = Ax$ on the boundary of $\Omega$.
\begin{figure}[h!]
    \centering
    \includegraphics[width=0.7\linewidth]{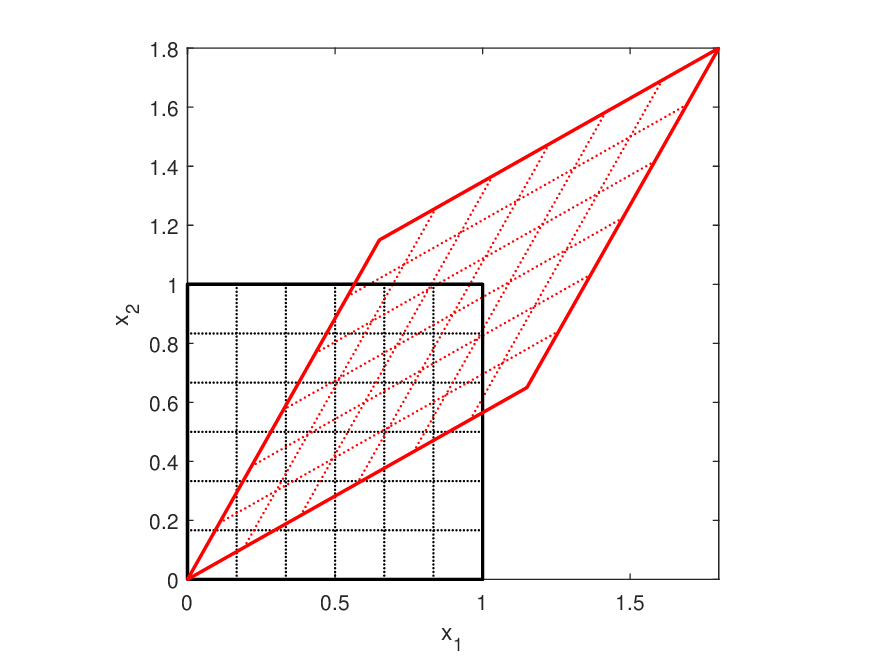}
    \caption{Original domain $\Omega$ (black frame) and deformed domain $\bar y(\Omega)$ (red frame) for the SVK energy with zero Poisson ratio and linear boundary conditions.}
    \label{fig:simple}
\end{figure}

\subsubsection{Quadratic boundary conditions}

Let us now focus on the quadratic boundary conditions. The problem reads
\begin{align}
\label{ex_simple_quad_original}
    J = &\inf_{y \in W^{1, 4}(\Omega)} \int_{\Omega}|F^TF - I|^2dx \\
    \text{s.t.: }&\left.\begin{pmatrix}
        y_1\\y_2
    \end{pmatrix}\right|_{\partial \Omega} = Ax + \begin{pmatrix}
        x^TB_1x\\x^T B_2x
    \end{pmatrix},\notag\\
    & y(x) \in \R^2, \quad F = \nabla y \in \R^{2\times 2} \notag,
\end{align}
where $B_1, B_2$ are given matrices, $A$ is a given full rank matrix and $\Omega = [0,1]^2$. The occupation measures relaxation of problem \eqref{ex_simple_quad_original}, which corresponds to problem \eqref{eq:LP-occupation}, is
\begin{align}
\label{ex_simple_quad_measure}
        J_{\text{lin}} := &\inf_{\mu \in\mathcal{M}_+(\Omega \times \R^2 \times \R^{2\times 2})}\int |Z^TZ - I|^2\,d\mu \\
    \text{s.t.: } & \int 
		\mathcal D\phi \, \mu 
		=
		\int 
		\phi \cdot n \, \mu_\partial, \quad \forall \phi \in \Phi. \notag
\end{align}
Problem \eqref{ex_simple_measure} can be directly solved with the moment-SOS hierarchy. However, for the quadratic boundary condition, we do not expect the first relaxation of \eqref{ex_simple_quad_measure} to be exact and thus, each relaxation of the hierarchy provides us with a lower bound for the optimum. For a particular choice of matrices
\begin{align*}
    A = \begin{pmatrix}
        0.8& 0\\0.1& -1
    \end{pmatrix}, \qquad B_1 = \begin{pmatrix}
        1.1 & 0.25\\ 
           0.25 & 1
    \end{pmatrix}, \qquad B_2 = \begin{pmatrix}
        0 & 0\\ 0& 0
    \end{pmatrix},
\end{align*}
we obtain the nondecreasing sequence of lower bounds reported at the second row of Table \ref{tab:bounds}. Figure \ref{fig:quad_simple} shows the extracted barycenter deformation for degrees 4 and 8, corresponding to relaxation orders 2 and 4, respectively. This figure shows that at degree $4$, the boundary condition is not satisfied and as the degree increases, the boundary condition becomes satisfied with greater precision.

\begin{table}[h]
\centering
\begin{tabular}{c|ccc}
relaxation order & 2 & 3 & 4 \\ \hline
lower bound & 29.813 & 32.139 & 33.485 \\
barycentric approximation & 36.228 & 31.767 & 32.670
\end{tabular}
\caption{Approximations of the optimum (5 significant digits), for various relaxation orders.}
\label{tab:bounds}
\end{table}

\begin{figure}[h!]
    \centering
    \begin{subfigure}[t]{0.5\textwidth}
        \centering
        \includegraphics[width=1\linewidth]{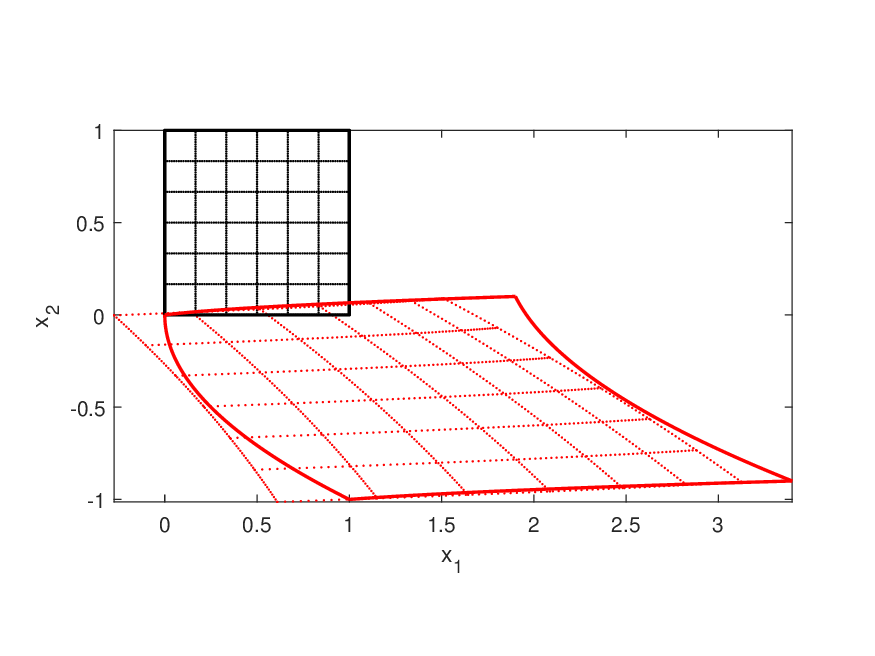}
        \caption{Relaxation order 2}
    \end{subfigure}~
    \begin{subfigure}[t]{0.5\textwidth}
        \centering
        \includegraphics[width=1\linewidth]{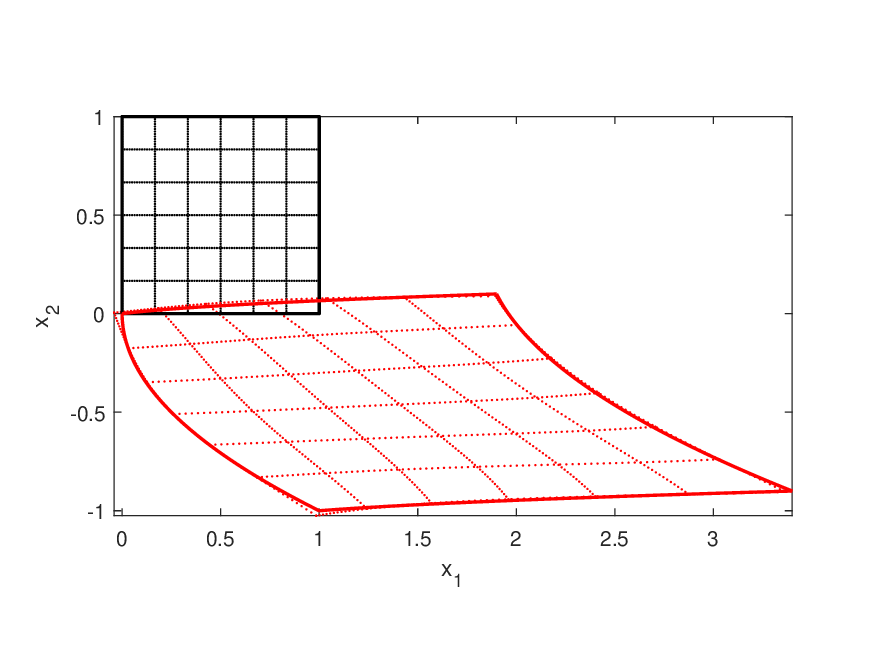}
        \caption{Relaxation order 4}
    \end{subfigure}
    \caption{Original domain $\Omega$ (black frame), deformed domain $\bar y(\Omega)$ (red frame) for the simple stored energy density with quadratic boundary condition}
    
\label{fig:quad_simple}
\end{figure}

Since the barycenter deformation $\bar y(x)$ should be optimal for the quasiconvexified problem \eqref{eq:relaxed}, and we have an analytical formula for the quasiconvex envelope \eqref{spectral}, we can plug in the approximate $\nabla \bar y(x)$ to the quasiconvexified problem \eqref{eq:relaxed} and obtain an approximate optimum by numerical integration. For approximate barycenter deformation of increasing degrees, we get the approximate values reported at the bottom row of Table \ref{tab:bounds}. For the numerical integration, we evaluated the approximate barycenter on a uniform grid of $80\times80$ points. Note that this computation suffers from multiple numerical errors. Namely, the barycenter $\bar y(x)$ is only approximate and numerical integration introduces errors as well.

\subsection{SVK with non-zero Poisson ratio}

Introducing the matrix
\[
X = \left(\begin{array}{cc} X_{11} & X_{12} \\ X_{12} & X_{22} \end{array}\right) := 2 E = F^T F - I_2,
\]
the SVK energy function can be expressed as
\begin{equation*}
    W(F)
    =
    a(F)^T D \,a(F),
    \qquad
    a(F)
    =
    \begin{pmatrix}
        X_{11} \\
        X_{22} \\
        2X_{12}
    \end{pmatrix}
\end{equation*}
where
\begin{equation}\label{eq:stiffness}
D  =
\begin{pmatrix}
\lambda + 2\mu & \lambda        & 0 \\
\lambda        & \lambda + 2\mu & 0 \\
0              & 0              & \mu
\end{pmatrix}
\end{equation}
is a positive definite stiffness matrix. The energy minimization example is then as follows
\begin{align}
\label{ex_simple_general}
    J = &\inf_{y \in W^{1, 4}(\Omega)} \int_{\Omega} a(\nabla y(x))^T D \, a(\nabla y(x)) dx \\
    \text{s.t.: }&\left.y\right|_{\partial \Omega} = Ax,\notag\\
    & y(x) \in \R^2, \quad \nabla y(x) \in \R^{2\times 2} \notag,
\end{align}
where $A$ is a given full rank matrix and $\Omega = [0, 1]^2$.

\subsection{Anisotropic energy}

When the energy function $W(F)$ is isotropic, i.e. when the stiffness matrix $D$ has the form \eqref{eq:stiffness} for some Lam\'e parameters $\lambda, \mu \geq 0$, an explicit formula for the quasiconvex envelope of the SVK energy was derived in \cite{LeDretRaoult1995b}. The infimum of problem \eqref{ex_simple_general} can be obtained by evaluating the quasiconvex envelope $W_{\text{quasi}}(F)$ at $F = A$ and thus, we can plug $A$ into the expression for the quasiconvex envelope to obtain the precise analytical value. When the stiffness matrix $D$ is positive definite but not necessarily of the form \eqref{eq:stiffness}, the function $W(F)$ is anisotropic in general, a closed form expression is not available.

For the anisotropic case, let us solve \eqref{ex_simple_general} when
\[
D = \begin{pmatrix}
    20 & 2 & 0\\
    2 & 5 & 0\\
    0 & 0 & 3\\
\end{pmatrix}.
\]
In this case, we can still compute the value for the linear boundary condition case from the formula of Theorem \ref{thm:LDR}. Indeed, we can numerically solve the following SDP
\begin{equation}
\label{little_sdp}
    \inf_{P \succeq 0}\tilde W(A^TA + P),
\end{equation}
where $\tilde W$ is defined in \eqref{eq:w_tilde},
which provides us with the numerical approximation of the optimal value. This is an SDP with quadratic cost, which can be reformulated as a linear SDP with second order cone constraint, and solved by MOSEK. For the choice
\[
A = \begin{pmatrix}
    0.9& -0.3\\ 1.2& 0.6
\end{pmatrix},
\]
the approximated value of the optimum from the SDP \eqref{little_sdp} is 32.383260, matching at 8 digits the value of the first relaxation. The approximate barycenter $\bar y(x)$ of the same wireframe grid as in previous examples is shown in Figure \ref{fig:anisotropic}.
 
\begin{figure}[h!]
    \centering
    \includegraphics[width=0.7\linewidth]{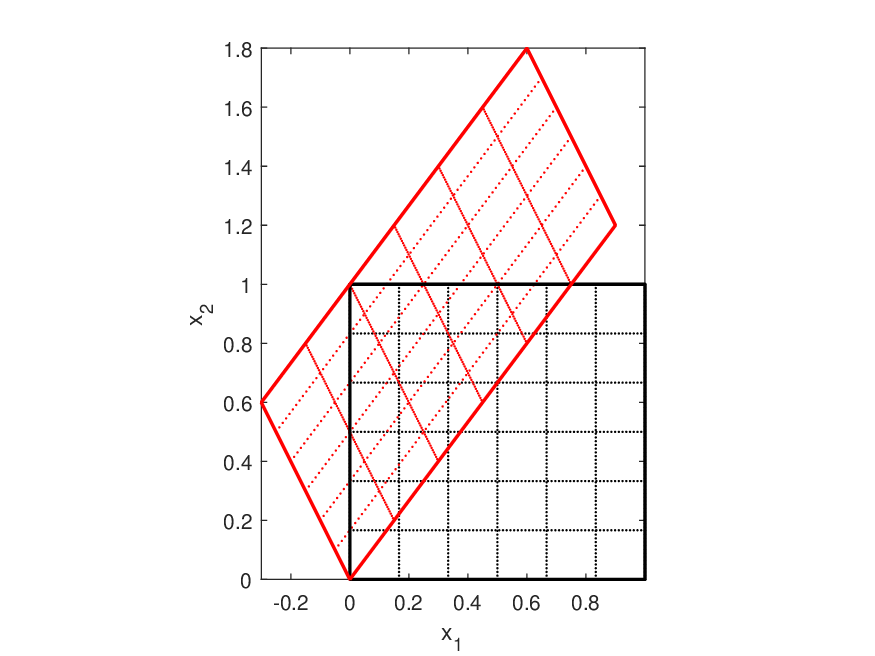}
    \caption{Original domain $\Omega$ (black frame), deformed domain $\bar y(\Omega)$ (red frame) for the anisotropic energy density.}
    \label{fig:anisotropic}
\end{figure}

	\section{Conclusions and perspectives}
	\label{sec:conclusion}
	
	We have shown how semidefinite optimization can be used to solve a class
	of non-convex stored energy minimization problems in nonlinear
	elasticity. Under frame indifference and convexity with respect to the
	Cauchy-Green strain tensor, the calculus of variations problem can be solved numerically with the Lasserre hierarchy with convergence guarantees. When the strain energy is polynomial SOS convex, and the boundary condition is linear, we established that the first
	relaxation of the hierarchy is exact. Consequently, the
	original non-convex calculus of variations problem with linear boundary
	data can be solved via a single small semidefinite program in the
	strain variables.
	
	The frame indifference exploited here is a
	manifestation of symmetry. So this work can be seen as a contribution to the literature on polynomial optimization which focuses on explicit
	reduction of the moment matrices and orbit decomposition, see \cite{Augier2024} and references therein.
	
	Several directions for future work remain open:
	\begin{itemize}
		\item \emph{Polyconvex energies.} In many models of nonlinear elasticity
		the stored energy is polyconvex rather than convex in the strain
		tensor. Extending the present no-relaxation-gap and exactness results to
		polyconvex integrands would require an appropriate moment formulation in
		the space of minors of the gradient matrix $F$.
		
		\item \emph{Quasiconvex but non-convex strain energies.} When Assumption
		\ref{ass:convex} fails but the energy $W$ remains quasiconvex, one still has no
		relaxation gap between the calculus of variations problem and its gradient Young measure
		formulation. Identifying conditions under which the Lasserre hierarchy converges, or the first or finitely
		many levels are exact in this broader
		setting is an interesting question.

	\end{itemize}
	
	Ultimately, a central objective is to establish general no-relaxation-gap
	results between the linear problem on occupation measures and the
	original stored energy minimization problem for polyconvex and quasiconvex integrands,
	and to identify structural conditions guaranteeing finite convergence of
	the moment-SOS hierarchy in this infinite-dimensional setting.

\section*{Acknowledgements}

This work was funded by the European Union/M\v{S}MT \v{C}R  under the ROBOPROX project (reg.~no.~CZ.02.01.01/00/22 008/0004590), by the Grant Agency of the Czech Technical University in Prague (grant No. SGS25/145/OHK3/3T/13) and by the CNRS Informatics International Research Project OPTIROB.
The authors acknowledge the use of AI for assistance with brainstorming ideas, mathematical development, coding and drafting the manuscript. The final content, analysis and conclusions remain the sole responsibility of the authors.

\end{document}